\newtheorem{theorem}{Theorem}[section]
\newtheorem{lemma}[theorem]{Lemma}
\newtheorem{definition}[theorem]{Definition}
\newtheorem{remark}{Remark}
\numberwithin{equation}{section}
\numberwithin{equation}{section}
\newtheorem{proposition}[theorem]{Proposition}
\newfont{\EUL}{eufm10 scaled 1000}
\newcommand\R{\mathbb{R}}
\newcommand\C{\mathbb{C}}
\renewcommand\P{\mathbb{P}}
\renewcommand\H{\mathbb{H}}
\newcommand\E{{\rm E}}
\newcommand\G{{\rm G}}
\newcommand\so{\mathfrak{so}}
\newcommand\su{\mathfrak{su}}
\renewcommand\u{\mbox{\EUL u}}
\renewcommand\sp{\mathfrak{sp}}
\newcommand\g{\mathfrak{g}}
\newcommand\cc{\mathfrak{c}}
\newcommand\h{\mathfrak{h}}
\renewcommand\u{\mathfrak{u}}
\newcommand\rk{{\rm rk}\,}
\newcommand\spann{{\rm span}}
\newcommand\SO{{\rm SO}}
\renewcommand\O{{\rm O}}
\newcommand\Spin{{\rm Spin}}
\newcommand\SU{{\rm SU}}
\newcommand\Sp{{\rm Sp}}
\renewcommand\S{{\rm S}}
\newcommand\U{{\rm U}}
\newcommand\Irr{{\rm Irr}}
\newcommand\Ea{{\rm E}}
\newcommand\Fa{{\rm F}}
\newcommand\Jcal{{\mathcal J}}
\begin{document}
%
%
\title[Totally complex submanifolds]{Maximal totally
  complex submanifolds\\ of $\H\P^n$: homogeneity and normal
  holonomy}
\author{Lucio Bedulli - Anna Gori - Fabio Podest\`a}
\address{Dipartimento di Matematica per le Decisioni
- Universit\`a di Firenze\\
via C. Lombroso 6/17\\50134 Firenze\\Italy}
\email{bedulli@math.unifi.it}
\address{Dipartimento di Matematica - Universit\`a di Bologna\\
Piazza di Porta S.\ Donato 5\\40126 Bologna\\Italy}
\email{gori@math.unifi.it}
\address{Dipartimento di Matematica e Applicazioni per l'Architettura
- Universit\`a di Firenze\\
Piazza Ghiberti 27\\50122 Firenze\\Italy}
\email{podesta@math.unifi.it}
\thanks{{\it Mathematics Subject
Classification.\/}\ 53C26, 53C40, 53C28, 57S15, 53C29}
\date{}
\keywords{Quaternionic-K\"ahler manifolds, totally complex
  submanifolds, homogeneous submanifolds, normal holonomy.}
\begin{abstract} We prove that a maximal totally complex submanifold $N^{2n}$ of the quaternionic projective space
$\H\P^n$ ($n\geq 2$) is a parallel submanifold, provided one of the following conditions is satisfied:
(1) $N$ is the orbit of a compact Lie group of isometries, (2) the
restricted normal holonomy is a proper subgroup of
$\U(n)$. \end{abstract}
\maketitle

\section{Introduction}
A $4n$-dimensional ($n\geqslant2$) quaternionic-K\"ahler Riemannian manifold ($M,g$) (qK-manifold for
brevity) is endowed with a parallel
rank $3$ subbundle $Q$ of ${\rm End}\,TM$, which is locally generated by a triple of locally defined anticommuting
$g$-orthogonal almost complex structures $(J_1,J_2,J_3=J_1J_2)$ (see \cite{Sal}, \cite{Besse} and also the next section
for a more detailed exposition). Here we will focus on positive qk-manifolds $M$, i.e. compact
qk-manifolds with
positive scalar curvature, and on
{\em totally complex} submanifolds of $M$, i.e. submanifolds
$N$ of $M$ for which there exists a local section $I$ of $Q$ which is a
local parallel complex structure on $N$;
the submanifold $N$ equipped with the induced metric and the local complex structure
$I$ turns out to be locally K\"ahler and this is actually equivalent to the fact that $J(TN)\subset TN^\perp$ for every
local complex structure $J\in \Gamma(Q)$ which is orthogonal to $I$ (see \cite{AlMa}, \cite{Ta}). When the dimension of
$N$ is half the dimension of the ambient space, the totally complex submanifold $N$ is called maximal, MTC for brevity;
Tsukada (\cite{parallel}) classified all MTC submanifolds of the quaternionic projective space which are parallel,
i.e. whose second fundamental form is parallel - in particular all
these examples turn out to be homogeneous and locally symmetric (see
also \cite{ADM}); we list
all these submanifolds in Theorem \ref{Tsu}.\\
The first result of this paper consists in the classification of {\em homogeneous} maximal totally complex submanifolds of
the quaternionic projective space, namely
\begin{theorem}\label{main} A compact MTC submanifold of the quaternionic projective space
$\H\P^n$ is parallel if and only if it is an orbit of a compact Lie group of isometries.
\end{theorem}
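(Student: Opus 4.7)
The ``only if'' part is an immediate consequence of the classification recalled in Theorem \ref{Tsu}: every parallel MTC submanifold of $\H\P^n$ listed there is a compact symmetric space realized as an orbit of a compact group of isometries.

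For the converse, assume $N=G\cdot p$ is a compact MTC submanifold, with $G$ a compact group of isometries of $\H\P^n$. First I would argue that $N$ is a generalized flag manifold: the MTC condition makes $(N,I)$ K\"ahler (for the local section $I$ of $Q|_N$), and a compact homogeneous K\"ahler manifold is a flag manifold, in particular simply connected. Consequently the locally defined parallel section $I$ extends to a global section of $Q|_N$ that is parallel with respect to the qK-connection, and by uniqueness this section is $G$-invariant. Writing $H=G_p$ and $V=T_pN$, one has $H\hookrightarrow\U(V,I)$; for any $J_2\perp I$ in $Q_p$ the MTC condition yields an $H$-equivariant isomorphism $J_2\colon V\to T_pN^\perp$.

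Next I would reformulate parallelism of the second fundamental form at the isotropy level: $\alpha_p\in\Hom_H(\Sym^2 V,T_pN^\perp)$, encoded via $J_2$ as $\phi\in\Hom_H(\Sym^2 V,V)$. For a homogeneous submanifold the condition $\nabla^\perp\alpha=0$ translates, through a Nomizu-type identity on a reductive complement $\g=\h\oplus\mathfrak{m}$, into a purely algebraic relation involving $\phi$, the mixed brackets in $\mathfrak{m}$, and the transport of $J_2$ by the normal connection.

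The third step is a classification: enumerate the pairs $(G,H)$ for which $G/H$ is a compact homogeneous K\"ahler manifold admitting a $G$-equivariant MTC embedding in $\H\P^n$. The requirement that $J_2$ intertwine the $H$-representations on $V$ and on $T_pN^\perp$, combined with the quaternionic-K\"ahler curvature of $\H\P^n$, is quite restrictive, and I would expect it to cut the list down to exactly Tsukada's examples. The main obstacle is precisely this classification: ruling out hypothetical homogeneous MTC submanifolds not appearing in Theorem \ref{Tsu}. A direct algebraic verification of the Nomizu identity from the MTC condition alone seems elusive, so the argument will likely proceed case-by-case on the possible flag manifolds $G/H$; as an alternative, one may lift $N$ to its horizontal complex twistor image $\tilde N\subset\CP^{2n+1}$, a compact homogeneous complex submanifold of complex dimension $n$, classify homogeneous horizontal complex submanifolds of $\CP^{2n+1}$, and conclude from this classification that $N=\pi(\tilde N)$ appears in Tsukada's list.
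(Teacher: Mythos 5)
Your overall strategy---reduce to a classification of homogeneous MTC orbits and match the result against Tsukada's list---is indeed the route the paper takes, and your ``alternative'' of passing to the horizontal complex lift in $\C\P^{2n+1}$ is essentially what the paper does. But the proposal stops exactly where the proof begins: the classification itself, which you yourself flag as ``the main obstacle,'' is the entire content of the argument and is not carried out. The paper's execution rests on three ingredients you do not supply: (i) Lemma \ref{centralizer}, showing that the semisimple part of $G$ already acts transitively and that the isotropy contains a maximal torus, which yields the dimensional bound $\dim G-\rk G\geq \dim_\C V-2$ for the inducing quaternionic representation $\rho:G\to\Sp(V)$ and, via the standard tables of low-dimensional irreducible representations, a short list of candidates when $\rho$ is irreducible and $G$ simple; (ii) a weight-theoretic constraint coming from horizontality of the complex lift (every weight $\mu\neq\pm\lambda$ of $V$ must be of the form $\alpha\pm\lambda$ with $\alpha$ a root), which is what controls the non-simple case; and (iii) Proposition \ref{isoorbit}, that the isotropic lift $\mathcal{L}(N)$ is a \emph{Lagrangian} $G$-orbit in the twistor space, equivalently that $G^\C$ has an open orbit in $\P_\C(V)$, which is what rules out degenerate configurations in the reducible case. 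None of these steps is routine, and the Nomizu-type identity you invoke plays no role in the actual proof.

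There is also an outright error in your second paragraph. A compact MTC orbit need not be simply connected, and the local complex structure $I$ need not extend to a global section of $Q|_N$: the paper's remark following Lemma \ref{centralizer} exhibits $\iota(N)=\Sp(3)/\U(3)\cdot\mathbb{Z}_2\hookrightarrow\H\P^6$, whose isotropy is disconnected precisely because the complex lift $\widehat N\to N$ is a nontrivial double cover identifying $\pm I$. What is a (simply connected) flag manifold is the lift $\widehat N$, not $N$ itself; so your reduction to a globally $G$-invariant $I$ and the resulting picture $H\hookrightarrow\U(V,I)$ must be replaced by statements about $\widehat N$ (and about $\mathcal{L}(N)$), exactly as the paper does.
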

In section 3 we will also prove some general facts about homogeneous MTC submanifolds $N$ of positive qK-manifolds $M$;
we show that such manifolds admit two different homogeneous lifts in the corresponding twistor space $Z$: one lift,
called the complex lift $\widehat N$, is a homogeneous complex submanifold of $Z$ and covers $N$ two to one; the other
lift, denoted by $\mathcal L(N)$, is a homogeneous Lagrangian
submanifold of $Z$ which is a circle bundle on $N$.
These lifts were introduced by Alekseevsky and Marchiafava
(\cite{AlMatwistor}) and by Ejiri and Tsukada (\cite{Ejiri}) respectively; here
we show their homogeneity whenever $N$ is homogeneous and will use this result in the proof of Theorem \ref{main}.\\
The second main result of this paper concerns the study of the
(restricted) normal holonomy group of a MTC submanifold of $\H\P^n$;
the normal holonomy group is the holonomy group of the normal
connection $\nabla^\perp$ and it turns out to be a powerful tool in the study of
the geometry of submanifolds in general (see e.g.\cite{OL}).
Normal holonomy groups have been investigated for submanifolds
of space forms and also for complex submanifolds of the complex projective space (see \cite{AlDi}, \cite{CDO}).
In the sequel we will denote by $\operatorname{Hol}_p^0(\nabla^\perp)
\subseteq \O(T_pN^\perp)$ the restricted holonomy group of
$\nabla^\perp$ at the point $p\in N$.
Our main result is the following
\begin{theorem}
\label{NH}
Let $N$ be a connected MTC submanifold of $\H\P^n$.\\
1) If $N$ is non parallel then $\operatorname{Hol}_p^0(\nabla^\perp)=\U(T_pN^\perp)\cong\U(n)$;\\
2) if $N$ is parallel (hence a symmetric space of the form $G/K$) then
$\operatorname{Hol}_p^0(\nabla^\perp)=\nu(K^0)$ where $\nu:K \to
\O(T_pN^\perp)$ is the slice representation.
\end{theorem}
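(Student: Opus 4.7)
In both cases I first observe that $\operatorname{Hol}_p^0(\nabla^\perp) \subseteq \U(T_pN^\perp)$. Indeed, the local section $I\in \Gamma(Q|_N)$ that restricts to a parallel complex structure on $N$ also preserves $TN^\perp$ (since $g(Iv,u) = -g(v,Iu) = 0$ for $u\in TN$, $v\in TN^\perp$, using $Iu\in TN$), and the Weingarten formula combined with the parallelism of $I$ as a section of $Q|_N$ yields $\nabla^\perp_X(Iv) = I\nabla^\perp_X v$ for all $X\in TN$, $v\in TN^\perp$. Hence $I$ is a $\nabla^\perp$-parallel, $g$-orthogonal complex structure on the normal bundle, and the stated inclusion follows.

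Part 2 is then a fairly direct consequence of homogeneity. If $N=G/K$ is parallel, then by Tsukada's classification (Theorem \ref{Tsu}) $N$ is the orbit of a compact group $G$ of isometries, so $\nabla^\perp$ and $\alpha$ are $G$-invariant. This gives $\nu(K^0)\subseteq \operatorname{Hol}_p^0(\nabla^\perp)$ directly, while the reverse inclusion follows from Ambrose--Singer together with the Ricci equation: $R^\perp$ is an algebraic combination of the $G$-invariant ambient curvature and of the $G$-invariant shape operators, so the normal curvature operators at $p$ lie in the image $\nu(\mathfrak{k})$.

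The core of the theorem is part 1. My plan is to pass to the twistor space $Z = \CP^{2n+1}$ and to the complex lift $\widehat{N}\subset Z$ of Alekseevsky--Marchiafava \cite{AlMatwistor} (see also section~3), a complex submanifold of complex dimension $n$ covering $N$ two to one. Assume by contradiction that $\operatorname{Hol}_p^0(\nabla^\perp)$ is a proper subgroup of $\U(n)$. The normal bundle of $\widehat{N}$ in $Z$ splits along the twistor fibration into a vertical complex line (tangent to the $\CP^1$-fibre at $(p,I|_p)$) and a horizontal rank-$n$ bundle isometrically isomorphic to $(TN^\perp,I)$. By a careful analysis of how the normal connection on $\nu(\widehat{N}\subset Z)$ couples the two factors via the O'Neill tensors of the twistor submersion, properness of $\operatorname{Hol}_p^0(\nabla^\perp)$ forces properness of the restricted normal holonomy of $\widehat{N}\subset Z$ in the ambient unitary group. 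The normal-holonomy theorem for complex submanifolds of complex projective space \cite{CDO} then forces $\widehat{N}$ to be a parallel complex submanifold of $Z$, hence an orbit of a compact subgroup of $\Aut(Z)$. This group action descends to an isometric action on $\H\P^n$ whose orbit is exactly $N$, so $N$ is a homogeneous MTC submanifold, and Theorem \ref{main} then forces $N$ to be parallel, contradicting the assumption.

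The step I expect to be the main obstacle is the transfer from properness of $\operatorname{Hol}_p^0(\nabla^\perp)$ to properness of the twistor normal holonomy, since the O'Neill tensors of the twistor submersion mix vertical and horizontal parts in a non-trivial way; the explicit form of the quaternionic-K\"ahler curvature of $\H\P^n$ is what has to make this coupling manageable. If this obstruction proved too stubborn, the alternative would be a direct intrinsic argument: computing $R^\perp$ via the Ricci equation using the explicit curvature tensor of $\H\P^n$, exploiting the algebraic form $\alpha(X,Y)=J\sigma(X,Y)$ of the second fundamental form (with $J\in Q$ orthogonal to $I$ and $\sigma$ symmetric and $I$-Hermitian), and then a Berger-type exclusion of proper $s$-representations inside $\U(n)$ case by case.
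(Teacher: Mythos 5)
Your plan for part (1) has a genuine gap at exactly the point you flag: the transfer of ``properness'' of $\operatorname{Hol}^0_p(\nabla^\perp)$ from $N$ to the normal holonomy of $\widehat N\subset \C\P^{2n+1}$ is not established, and it is the whole content of the argument. Moreover, even granting that transfer, the theorem of Console--Di Scala--Olmos \cite{CDO} requires the normal holonomy of the complex submanifold to be \emph{non-transitive on the unit normal sphere}, not merely a proper subgroup of the unitary group of the normal space; a proper subgroup of $\U(n+1)$ (e.g.\ $\SU(n+1)$ or $\Sp(k)$ when $n+1=2k$) can still be transitive, so your hypothesis does not feed into \cite{CDO} as stated. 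The paper's actual proof is much closer to the ``alternative'' you mention only in passing, and it is worth knowing because it is short: a local section $J$ of $\mathcal L(N)$ gives a bundle isomorphism $TU\to TU^\perp$, and pulling back $\nabla^\perp$ through $J$ produces a connection $\nabla'$ on $TU$ with $\nabla'_XY=\nabla_XY-\alpha(X)IY$ and $R'_{XY}=R_{XY}+\frac{\tau}{4n(n+2)}\omega(X,Y)I$. Hence $\operatorname{Span}\{\h',I_p\}=\operatorname{Span}\{\h,I_p\}$, where $\h$ is the Riemannian holonomy algebra of $N$. One then shows $\h'\not\subseteq\su(T_pN)$ (otherwise \eqref{Einsteinconstant} forces $N$ to be K\"ahler--Einstein with a specific scalar curvature, hence parallel by Theorem \ref{Tsu}(b), and the value of the scalar curvature is excluded case by case via Takeuchi's formula), and separately that $\h=\u(T_pN)$ for non-parallel $N$ by Berger's theorem combined with Theorem \ref{Tsu}(b),(c). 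The center of $\h'$ must then be $\R I_p$, giving $\h'=\u(T_pN)$. No twistor-space normal holonomy is needed.

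In part (2) the inclusion $\nu(K^0)\subseteq\operatorname{Hol}^0_p(\nabla^\perp)$ does \emph{not} follow ``directly'' from $G$-invariance of $\nabla^\perp$: invariance only shows that $\nu(K^0)$ \emph{normalizes} the holonomy group (isotropies intertwine parallel transports along a loop and its image), and your Ambrose--Singer argument for the reverse inclusion is also incomplete, since Ambrose--Singer involves curvature operators at all points conjugated back by $\nabla^\perp$-parallel transport, which you cannot identify with isometry transport without further argument. The paper instead proves $\operatorname{Hol}^0_p(\nabla^\perp)\subseteq\nu(K)$ by invoking Naitoh's theorem (a parallel submanifold with curvature-invariant normal spaces is a symmetric submanifold) together with the argument of \cite[Proposition 2.4]{CD}, and then obtains equality by a dimension count $\dim K=\dim\h'=\dim\operatorname{Hol}^0_p(\nabla^\perp)\le\dim\nu(K)\le\dim K$, which in turn rests on the nontrivial fact that $I_p\in\h'$ (checked using irreducibility or the explicit curvature in the reducible cases). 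Your opening observation that $I$ is $\nabla^\perp$-parallel on the normal bundle, so that $\operatorname{Hol}^0_p(\nabla^\perp)\subseteq\U(T_pN^\perp)$, is correct and consistent with the paper, but the rest of the proposal does not yet constitute a proof.
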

We note here that this theorem resembles the recent result by Console, Di Scala and Olmos (\cite{CDO}) according to which
a full, complete complex submanifold $N$ of $\C\P^n$ whose normal holonomy group is not transitive on the unit sphere
of its normal space (hence is not $\U(k)$ where $k = n - \dim_\C N$) is parallel.\\
{\bf Notation.} Lie groups and their Lie algebras will be indicated by
capital and gothic letters respectively.
Moreover the notation for the fundamental weights $\Lambda_i$ of the
simple Lie algebras follows the standard conventions as in \cite{OV},
and we often refer to irreducible representations through the corresponding maximal weights.
Finally if $G$ is a Lie group then $\rk G$ will denote its rank.

\section{Preliminaries}\label{1}

Let $(M,g)$ be a $4n$-dimensional ($n\geqslant2$)
Riemannian manifold and $\nabla$ its Levi-Civita connection.
A quaternionic-K\"ahler structure on $M$ is a $\nabla$-parallel rank 3 subbundle $Q$ of ${\rm End}\,TM$,
which is {\em locally} generated by a triple of locally defined anticommuting $g$-orthogonal almost complex
structures $(J_1,J_2,J_3=J_1J_2)$, a so called {\em local quaternionic basis}. The manifold $M$ endowed with the metric $g$ and the quaternionic-K\"ahler
structure $Q$ will be called a quaternionic-K\"ahler manifold, a qK-manifold for brevity.\\
We recall (see e.g. \cite{Besse}) that a qK-manifold is automatically
Einstein; here we will consider only {\em
positive} qK-manifolds, namely those which are compact with positive scalar curvature.\\
Since the bundle $Q$ is parallel, there are three locally defined
$1$-forms $\alpha_i$ ($i=1,2,3$) such that for every cyclic
permutation $(i,j,k)$ of $(1,2,3)$
$$\nabla J_i = \alpha_k\otimes J_j - \alpha_j\otimes J_k\eqno (1.1)$$
and
$$d\alpha_i + \alpha_j \wedge \alpha_k  =  -\frac{\tau}{4n(n+2)}\omega_i,\eqno (1.2)$$
where $\omega_\ell$ denotes the fundamental 2-form defined by $\omega_\ell(\cdot,\cdot)=g(J_\ell\,\cdot,\cdot)$ relative to $J_\ell$ for $\ell=1,2,3$, $\tau$ is the scalar curvature.\par
An important tool in the quaternionic-K\"ahler geometry is the {\em
twistor space} $Z$ of a non-flat qK manifold
$(M,g,Q)$: if we endow $Q$ with the natural Euclidean structure such that every local
quaternionic basis is orthonormal, then $Z$ is defined to be the sphere bundle associated to $Q$; the twistor
space $Z$ endowed with the natural projection $\pi\colon Z \to M$
is then a $S^2$-bundle over $M$ whose fibers consist of complex structures. It is well known that the twistor space of
a positive qK-manifold is a complex manifold with complex structure $\Jcal$ and it admits a K\"ahler-Einstein metric $\tilde g$ such that the projection $\pi$ is a
Riemannian submersion (\cite{Sal},\cite{Besse}): every fiber (a so
called {\em twistor line}) is a complex submanifold biholomorphic to $\C\P^1$ and
at any point $z \in Z$ the tangent space
$T_zZ$ splits as the $\tilde g$-orthogonal sum of the horizontal
$\mathcal{H}_z \cong T_{\pi(z)}M$ and the vertical subspace
$\mathcal{V}_z: = \ker {\pi_*}_z \cong \C$; then by definition $\Jcal|_{\mathcal{H}_z}$ is identified
with $z$
itself via $\pi_*$ , while  $\Jcal|_{\mathcal{V}_z}$ is the standard complex structure of $\C\P^1$.
We will now consider totally complex submanifolds of a qK-manifold, namely
\begin{definition}\label{defloc}
A submanifold $N$ of a qK-manifold $(M,g,Q)$ is said to
be {\it totally complex} if for every $x\in N$ there exists an open
neighborhhood $U$ of $x$ in $N$ and a section $I$ of $Q_{|U}$ such that
\begin{enumerate}
\item $I(TU)=TU$;
\item $I^2=-{\rm id}$;
\item $J(T_yN)\subseteq (T_yN)^\perp$ for every $y\in U$ and $J \in
  Q_y$ such that $I_yJ=-JI_y$.
\end{enumerate}
\end{definition}
As it was shown by Tsukada in \cite{parallel} the local $g_{|N}-$orthogonal
almost complex structure $I$ on $N$ turns out to be integrable, hence
$(U,g_{|U},I_{|U})$ is a K\"ahler manifold (here the condition $\dim_\R N\geq 4$ is essential, see
\cite{parallel} p. 7). Furthermore if $N$ is a
totally complex submanifold of $M$,
then around any point of $N$ it is possible to choose a local
quaternionic basis $J_1,J_2,J_3$ such that $J_1$ is the local complex
structure of Definition \ref{defloc}. As in
\cite{AlMa} we call such a triple a {\em local adapted basis}.
Note that $J_l(TN)\subseteq TN^\perp$ for $l=2,3$.\\
Totally complex submanifolds of maximal dimension of $\H\P^n$ with parallel second
fundamental form have been classified in \cite{parallel} by
Tsukada, who also characterizes in \cite{KE}  those
whose local K\"ahler structure is Einstein or reducible.
\begin{theorem}{\bf (Tsukada\ \cite{parallel},\cite{KE})}\label{Tsu}
Let $\iota: N \to \H\P^n$ be a totally complex immersion of  a
$2n$-dimensional K\"ahler manifold $N$.
(a)\ If $\iota$ has parallel
second fundamental form, it is locally congruent to one of the following
immersions
\begin{enumerate}
\item$\C\P^n\,\rightarrow \H\P^n$;
\item$\C\P^1\times\SO(n+1)/\SO(2)\times
  \SO(n-1)\,\rightarrow \H\P^n$, $(n\geq4)$;
\item$\C\P^1\times\C\P^1\times\C\P^1\,\rightarrow \H\P^3$;
\item$\C\P^1\times\C\P^1 \,\rightarrow \H\P^2$;
\item $\Sp(3)/\U(3)\,\rightarrow \H\P^6$;
\item$\SU(6)/\S(\U(3)\times \U(3))\,\rightarrow \H\P^9$;
\item$\SO(12)/\U(6)\,\rightarrow \H\P^{15}$;
\item$\E_7/E_6\cdot T^1\,\rightarrow \H\P^{27}$;

\end{enumerate}
(b)\ if $N$ is K\"ahler-Einstein then it has parallel second fundamental
form, and $\iota$ is locally congruent to one of the cases $(1),(3),(5),(6), (7), (8)$; \\
(c)\ if $N$ is locally reducible, then $\iota$ is again parallel and it is locally congruent
to one of the cases $(2),(3),(4)$.
\end{theorem}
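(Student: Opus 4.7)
The plan is to use the twistor correspondence to reduce the statement to the classification of parallel K\"ahler submanifolds of complex projective space. Given an MTC submanifold $N\subset\H\P^n$ with parallel second fundamental form, its complex lift $\widehat N$ sits inside the twistor space $Z=\C\P^{2n+1}$ as a complex submanifold double-covering $N$. The first step is to show that parallelism of the second fundamental form passes between $N\subset\H\P^n$ and $\widehat N\subset\C\P^{2n+1}$, which requires exploiting the structure equations (1.1)--(1.2), the O'Neill formulas for the Riemannian submersion $\pi\colon Z\to\H\P^n$, and the compatibility of a local adapted basis $(J_1,J_2,J_3)$ with the horizontal distribution. Once this is established, one applies the Nakagawa--Takagi classification of parallel K\"ahler submanifolds of $\C\P^N$: $\widehat N$ must be one of the standard (first canonical) equivariant embeddings of an irreducible compact Hermitian symmetric space, or a Segre product of such.

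The next step is to identify which of these candidates descend to MTC submanifolds of $\H\P^n$: one checks which can be realized as $\sigma$-invariant subvarieties of $\C\P^{2n+1}$ transverse to the twistor fibration, where $\sigma$ is the antiholomorphic involution acting antipodally on each twistor line. Going through the Nakagawa--Takagi list, this compatibility condition selects exactly the eight cases in (a), and in each case it pins down both the ambient $\H\P^n$ and the embedding. Parts (b) and (c) then follow by direct inspection of the resulting list: $N$ is K\"ahler--Einstein precisely when $\widehat N$ is an irreducible Hermitian symmetric space (leaving cases (1), (3), (5)--(8)), and $N$ is locally reducible precisely when it is a nontrivial Riemannian product; in the MTC setting, the totally complex condition forces any reducible factorization to involve a $\C\P^1$ factor, singling out cases (2), (3), (4).

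The main obstacle is the first step: carefully relating the two second fundamental forms across the twistor fibration. One must track how the horizontal lift of $TN$ and its normal bundle in $\C\P^{2n+1}$ decompose with respect to the K\"ahler structure of $Z$, and how the connection forms $\alpha_i$ contribute correcting vertical terms when commuting $\nabla$ with the horizontal lift; this is what makes the equivalence of parallelism on the two sides nontrivial. A secondary subtlety is ruling out candidates from the Nakagawa--Takagi list whose codimension or automorphism type is incompatible with any $\sigma$-invariant realization inside a twistor space, which is essentially a case-by-case verification that still requires careful tracking of isotropy representations to guarantee that no spurious items slip through.
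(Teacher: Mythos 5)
First, note that the paper does not actually prove this statement: it is quoted from Tsukada's papers \cite{parallel} and \cite{KE}, so there is no internal proof to compare yours against; the Remark and table following the theorem only describe how the examples arise via the twistor fibration. Your strategy for part (a) --- lift to $\widehat N\subset\C\P^{2n+1}$, transfer parallelism across the Riemannian submersion, invoke the Nakagawa--Takagi classification of parallel K\"ahler submanifolds, then select the admissible candidates --- is indeed the standard route (essentially that of \cite{ADM} and of Tsukada himself), but two points are off. The selection criterion is not ``$\sigma$-invariance plus transversality to the fibration'': the correct condition is that the lift be \emph{horizontal}, i.e.\ an integral submanifold of the holomorphic contact distribution on $\C\P^{2n+1}$ (in particular of complex dimension exactly $n$); $\sigma$-invariance of the full complex lift is automatic (one may always replace a horizontal $S$ by $S\cup\sigma(S)$), while mere transversality is far weaker than horizontality and would admit many complex submanifolds whose projections are not totally complex. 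Moreover the two steps you yourself flag as ``obstacles'' (the equivalence of parallelism on the two sides of the submersion, and the case-by-case selection) are the entire substance of the argument and are not carried out, so for part (a) what you have is a plan rather than a proof.

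The more serious gap is in parts (b) and (c). These are not statements about which entries of the list in (a) happen to be Einstein or reducible; they assert that \emph{any} totally complex immersion of a K\"ahler--Einstein (respectively locally reducible) $2n$-dimensional K\"ahler manifold automatically has parallel second fundamental form. ``Direct inspection of the resulting list'' presupposes parallelism and therefore proves only the second half of each claim; the implications ``Einstein $\Rightarrow$ parallel'' and ``reducible $\Rightarrow$ parallel'' require genuinely new arguments (indeed (b) is the subject of the separate paper \cite{KE}, and the present paper relies on precisely these implications in the proofs of Lemma \ref{hol2} and Theorem \ref{NH}). As written, your proposal does not address them at all.
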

\begin{remark}{\em All of the immersions listed in Theorem \ref{Tsu}
    are obtained in the following way. For each compact simple Lie group $S$ it is
    known (see e.g. \cite{Besse}) that there exists a unique symmetric space $S/L$ (called
    a Wolf space) admitting an invariant qk-structure. Given a Wolf space $S/L$, the isotropy $L$ can be decomposed
    as $L = \Sp(1)\cdot G$,
    where $G$ is compact semisimple and the $\Sp(1)$-factor acts on $V:= T_{[eL]}S/L$ defining a $G$-invariant
    qk-structure; therefore the isotropy representation of $G$ results into a quaternionic representation 
    $\rho:G\to \Sp(V)$. We then have a
    $G$-equivariant twistor fibration  $\pi:\P_\C(V)\to\P_\H(V)$  and a parallel
    totally complex submanifold of $\P_\H(V)$ is the image via $\pi$
    of a complex $G$-orbit in $\P_\C(V)$ (this orbit is unique if
    $\rho$ is irreducible); in the following table the parallel immersions listed in Theorem \ref{Tsu} part (a) are
    reproduced in the same order with the indication of the corresponding Wolf space, the group $G$ (or, with abuse 
    of notation, some covering of it)
    and its isotropy representation.} 
\end{remark}
\begin{small}
\begin{table}[h]
\centering
\begin{tabular}{|l|l|l|l|l|l|}
\hline \quad\quad\qquad S/L &$\quad\quad G$   & $\quad \rho$                              &  $\dim_\C \P(V)$ \\
\hline $\SU(n+2)/\S(\U(2)\times\U(n))$ &$\U (n)$   & $\Lambda_1\oplus \Lambda_1^*$           &  $2n-1$        \\
       $\SO(n+6)/\SO(4)\times\SO(n+2)$ &$\SO(n+2)\times\SU(2)$ & $\Lambda_1\otimes \Lambda_1$ & $2n+3, n\geq 3$\\
       $\SO(8)/\SO(4)\times\SO(4)$ &$\SU(2)\times\SU(2)\times \SU(2)$ & $\Lambda_1\otimes\Lambda_1\otimes\Lambda_1$ & $7$   \\
       $\SO(7)/\SO(4)\times \SO(3)$ &$\SU(2)\times\SU(2)$ &  $(2\Lambda_1)\otimes\Lambda_1$ & $5$   \\
       $\Fa_4/\Sp(1)\cdot\Sp(3)$ &$\Sp (3)$    &  $\Lambda_3$                     &  $13$         \\
       $\Ea_6/\Sp(1)\cdot\SU(6)$ &$\SU (6)$    &  $\Lambda_3$                     &  $19$   \\
       $\Ea_7/\Sp(1)\cdot\Spin(12)$&$\Spin (12)$ &  $\Lambda_5$                      &  $31$         \\
       $\Ea_8/\Sp(1)\cdot\Ea_7$ &$\Ea_7$      &  $\Lambda_1$                      &  $55$      \\

\hline
\end{tabular} \vspace{0.5cm}
\label{lista}
\end{table}
\end{small}
Note that the Wolf space $\Sp(n+1)/\Sp(1)\cdot\Sp(n)$ is missing in the
list because it gives the transitive standard action of $\Sp(n)$ on
$\P_\H(V)$; the Wolf space $\G_2/\SO(4)$ is also missing
because we consider quaternionic projective spaces of real dimension
at least 8.\\ 

Using the twistor
construction one can settle two kinds of correspondence involving
totally complex submanifolds.  Let $N$ be a totally complex
submanifold of $M$.
For every $p\in N$ consider the set $\widehat{N}_p:=\{I\in Z_p|\,I(T_pN)=T_pN
\}$. The space $\widehat{N}:=\cup_{p\in N}\widehat{N}_p\subset
Z$ turns out to be a complex submanifold (see
\cite{Ta},\cite{parallel} and \cite{AlMatwistor}) of the twistor space
$Z$ which is horizontal, i.e. at each point $z\in \widehat N$ we
have $T_z\widehat N \subset \mathcal{H}_z$.
The restriction $\pi|_{\widehat N}$ defines a double covering of $N$
which may be either connected or not. We will call $\widehat N$ the {\it complex lift\/}
of $N$.\\
The second correspondence makes use of the symplectic
structure of $Z$ instead of the complex one. Fix $p \in N$ and define
$S_p(I)=\{J\in Z_p : J(T_pN)\subseteq (T_pN)^\perp\}$; it is
not difficult to see that $\mathcal{L}(N)=\cup_{p\in N}S_p(I)$ is a connected
isotropic submanifold of $Z$ which is an $S^1$-bundle over $N$
(see \cite{Ejiri} where it is also shown that $\mathcal{L}(N)$ is
minimal). We will call $\mathcal{L}(N)$ the {\em isotropic lift} of $N$.

\section{Homogeneous totally complex submanifolds}

We will now suppose to have a compact connected Lie group $G$ of isometries of a positive qK-manifold ($M,g,Q$) and we
assume that $G$ has an orbit $N = G\cdot p$ which is a totally complex submanifold.
Now we are going to present a sort of equivariant version of the previous constructions.
It is well known that we can lift the $G$-action to an isometric action on the twistor
space $Z$ defining
\[
g\cdot J=g_*Jg_*^{-1}
\]
for every $J \in Z$ and $g \in G$, so that the projection $\pi$ is $G$-equivariant (see e.g. \cite{Besse}). The complex
lift $\widehat{N}$ turns out to be $G$-stable by definition. Every
$G$-orbit in $\widehat{N}$ is open (because it has the same dimension
of $\widehat{N}$) and closed by compactness; therefore every connected
component of $\widehat{N}$ is $G$-homogeneous.
This fact allows us to make the following observation.
\begin{lemma}\label{centralizer}
The semisimple part $G^{ss}$ of $G$ acts transitively on $N$. In particular if $N\cong G^{ss}/H$ for some subgroup $H$, then the
connected component $H^o$ is the centralizer of some torus in $G^{ss}$.
\end{lemma}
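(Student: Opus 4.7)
The plan is to work on the complex lift rather than directly on $N$. Pick a connected component $\widehat N_0$ of $\widehat N$; by the observation just above, $\widehat N_0=G\cdot z$ is a $G$-homogeneous complex submanifold of $Z$, and the restriction $\pi|_{\widehat N_0}\colon\widehat N_0\to N$ is a $G$-equivariant finite covering, its image being a closed $G$-invariant subset of the single $G$-orbit $N$. Hence it is enough to prove the two statements with $N$ replaced by $\widehat N_0$.

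To this end I would exploit the K\"ahler structure of $Z$. Since $(M,g,Q)$ is positive, $Z$ is a simply connected compact K\"ahler-Einstein manifold with positive scalar curvature, so $H^1(Z,\R)=0$ and the holomorphic isometric $G$-action on $Z$ is Hamiltonian; fix a $G$-equivariant moment map $\mu\colon Z\to\g^*$. The classical input I would invoke is that every complex $G$-orbit in a compact Hamiltonian K\"ahler $G$-manifold is sent $G$-equivariantly by $\mu$ onto a coadjoint orbit of $G$ in $\g^*$; in particular the identity component $G_z^0$ of the stabilizer coincides with the centralizer $Z_G(T)$ of some torus $T\subseteq G$. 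Since $Z(G)^0\subseteq Z_G(T)$, one has $G\cdot z=G^{ss}\cdot z$, which under $\pi$ gives transitivity of $G^{ss}$ on $N$.

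For the second statement, fix $p\in N$ and $z\in\widehat N_0\cap\pi^{-1}(p)$. The finiteness of the covering forces $G_p^0=G_z^0=Z_G(T)$. Decomposing $\g=\g^{ss}\oplus\z$ and accordingly $\mathfrak t=\mathfrak t^{ss}\oplus\mathfrak t^c$ with $\mathfrak t^{ss}\subseteq\g^{ss}$, elements of $\z$ commute with everything, so $Z_G(T)=Z_G(T^{ss})$, where $T^{ss}=\exp\mathfrak t^{ss}$. At the Lie-algebra level
\[
\h=\g^{ss}\cap\g_p=\{X\in\g^{ss}\,:\,[X,\mathfrak t^{ss}]=0\}=\mathrm{Lie}\,Z_{G^{ss}}(T^{ss}).
\]
Since the centralizer of a torus in a compact connected Lie group is connected, $H^0$ and $Z_{G^{ss}}(T^{ss})$ are connected subgroups of $G^{ss}$ with the same Lie algebra, and therefore coincide.

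The main obstacle is the classical Hamiltonian-K\"ahler input used above, i.e.\ the identification of a complex $G$-orbit with a coadjoint orbit via the moment map. An alternative, moment-map-free approach would invoke Matsushima's structure theorem, which writes a compact homogeneous K\"ahler manifold as the product of a generalized flag manifold and a complex torus, and then exclude the torus factor using the horizontality of $\widehat N_0\subseteq Z$; this route appears technically heavier without any real gain.
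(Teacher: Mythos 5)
Your argument is correct, and while it runs on the same fuel as the paper's proof --- the Hamiltonian character of the $G$-action on the simply connected twistor space $Z$, applied to the complex lift --- it is organized around a different key lemma. The paper proves transitivity of $G^{ss}$ by contradiction: a toric factor of $N$ would lift to an orbit of the centre of $G$ that is simultaneously complex and isotropic in $Z$ (orbits of abelian groups under Hamiltonian actions are isotropic), which is impossible; it then gets the statement about $H^o$ by quoting the Borel--Remmert/Matsushima structure theorem for compact homogeneous K\"ahler spaces of semisimple groups. You instead invoke a single classical fact: a complex $G$-orbit in a K\"ahler Hamiltonian $G$-space is a symplectic submanifold, so $d\mu$ is injective along it and $\mu$ restricts to a covering --- hence, coadjoint orbits of compact groups being simply connected, a $G$-equivariant diffeomorphism --- onto a coadjoint orbit; this identifies $G_z^0$ with the centralizer of a torus and delivers both the triviality of the central action and the flag-manifold structure of $H^o$ in one stroke. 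Your route is more self-contained (it does not need the structure theorem as a black box) at the cost of a slightly longer setup; the paper's is shorter on the page. Two cosmetic points: $\exp\mathfrak{t}^{ss}$ need not be closed, so one should replace it by its closure, a torus with the same centralizer; and it is worth stating explicitly that a complex submanifold of a K\"ahler manifold is symplectic, which is exactly why the moment map is immersive on $\widehat N_0$.
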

\begin{proof}
Suppose by contradiction that $N$ contains a toric factor $S$ (which
is in turn totally complex). Note that, since the twistor space $Z$ is
simply connected,
the $G$-action on $Z$ is Hamiltonian; therefore the complex lift $\hat{S}$
would be simultaneously complex and isotropic in the twistor space
$Z$, being the orbit for
the Hamiltonian action of the center of $G$. The last assertion follows form the fact that $G^{ss}/H^o$ can be identified
with a connected component of $\widehat N$ that is a K\"ahler homogeneous space (see e.g. \cite{Besse}).
\end{proof}
So from now on we can restrict our attention to {\em semisimple} Lie groups.
\begin{remark}{\rm The parallel immersions listed in Theorem \ref{Tsu} (a) that are coming
    from irreducible representations (all cases except (1)) are not
    embeddings. Actually the immersion $\iota$ is $G$-equivariant and
    the image $\iota(N)$ is an orbit $G/H$ in $\H\P^n$, where $G$ acts
    irreducibly on $\C^{2n+2}$: if $p\in \iota(N)$ with $G_p = H$, we
    know that there is a point $q$ in the twistor line lying over $p$ such that the orbit
$\widehat{\iota(N)} = G\cdot q \subset \C\P^{2n+1}$ is complex.
The point $q$ corresponds to a complex structure $I$ in $Z_p$
that leaves $T_p \iota (N)$ invariant and the same argument can be
used with the opposite complex structure, say $q'$, leading to
another complex orbit $G\cdot q'$ lying over $\iota(N)$.
If $G$ acts irreducibly on $\C^{2n+2}$ there is only one complex orbit
in the projective space, hence $G\cdot q = G\cdot q' =
\widehat{\iota(N)}$ and the covering $G/G_q\to G/H$ is two to one.
Since $G_q$ is connected, $H$ has two connected components;
so, for instance, if $N=\Sp(3)/\U(3)$ then
$\iota(N)=\Sp(3)/\U(3)\cdot {\mathbb Z}_2\hookrightarrow \H\P^6$.
This also shows that the local complex structure $I$ in the
definition \ref{defloc} cannot be chosen to be global in general.}\end{remark}
The fact that also $\mathcal{L}(N)$ is a $G$-orbit requires a more
detailed explanation and will have deeper consequences. By the very definition
of the induced actions of $G$ on $Z$, we see that $\mathcal{L}(N)$ is $G$-stable.
\begin{proposition}
\label{isoorbit}
The lift $\mathcal{L}(N)$ is an isotropic $G$-orbit in
$Z$ of dimension $\dim_\R N + 1$. In particular if $N$ is a {\em maximal}
totally complex $G$-orbit, then $\mathcal{L}(N)$ is a {\em Lagrangian}
$G$-orbit.
\end{proposition}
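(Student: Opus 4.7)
I plan to prove the three claims of the proposition in turn: $G$-stability, that $\mathcal{L}(N)$ is a single $G$-orbit of dimension $\dim N+1$, and the Lagrangian conclusion. The middle step is the main one, and I intend to reduce it to a non-triviality statement for the first Chern class of the $S^1$-bundle $\mathcal{L}(N)\to N$.

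For $G$-stability, if $J\in S_p(I_p)$ and $g\in G$, then
\[
(g\cdot J)(T_{gp}N)=g_*\bigl(J(T_pN)\bigr)\subseteq g_*(T_pN)^\perp=(T_{gp}N)^\perp,
\]
so $g\cdot J\in\mathcal{L}(N)$. The fiber $S_p(I_p)$ is the great circle in $Z_p\cong S^2$ orthogonal to $\{\pm I_p\}$, so $\dim\mathcal{L}(N)=\dim N+1$. Now fix $J_0\in S_p(I_p)$: the orbit $G\cdot J_0\subseteq\mathcal{L}(N)$ is closed and projects onto $N$, hence has dimension $\dim N+d$ with $d\in\{0,1\}$ the dimension of the $G_p^0$-orbit on $S_p(I_p)$. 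Since $G_p$ preserves $T_pN$, it preserves the antipodal pair $\{\pm I_p\}\subset Z_p$ (the only $J\in Q_p$ mapping $T_pN$ to itself), so the connected $G_p^0$ fixes $I_p$ and acts on $Q_p$ through a connected subgroup of the circle $\SO(2)\subset\SO(Q_p)$ of rotations about the $I_p$-axis; this subgroup is either trivial or the whole $\SO(2)$, and in the latter case transitive on $S_p(I_p)$. Ruling out triviality gives $d=1$, and $G\cdot J_0$ is then open and closed in the connected $\mathcal{L}(N)$.

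Suppose, for contradiction, that $G_p^0$ acts trivially on $Q_p$. Then $gp\mapsto g\cdot J_0$ is a $G$-equivariant multisection of the $S^1$-bundle $\pi\colon\mathcal{L}(N)\to N$, forcing the real first Chern class $c_1(\pi)\in H^2(N;\R)$ to vanish. I contradict this by computing $c_1(\pi)$. In a local adapted basis $J_1=I,J_2,J_3$, the Gauss identity applied to (1.1) gives, for $X,Y\in TN$,
\[
h(X,J_1Y)-J_1h(X,Y)=\alpha_3(X)J_2Y-\alpha_2(X)J_3Y;
\]
in the maximal case $J_2\colon TN\to TN^\perp$ is an isomorphism, so $h=J_2\tilde A$ for a symmetric $\tilde A\colon TN\times TN\to TN$, and matching this identity modulo the decomposition $TN^\perp=J_2(TN)$ yields that the $I$-complex-linear part of $\tilde A$ is $\tilde A_\C(X,Y)=\tfrac12[\alpha_2(X)Y-\alpha_3(X)IY]$. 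Symmetry of $\tilde A_\C$ together with $n\geq 2$ (so that for each $X\in TN$ one can choose $Y\in TN$ independent from $X$ over $\R+I\R$) then forces $\alpha_2|_{TN}=\alpha_3|_{TN}=0$. Under this vanishing the twistor parallel transport preserves $\mathcal{L}(N)$, so $\alpha_1$ is the connection $1$-form of the induced principal $S^1$-connection on $\mathcal{L}(N)\to N$, and (1.2) gives
\[
d\alpha_1|_{TN}=-\tfrac{\tau}{4n(n+2)}\omega_I,
\]
a nonzero real multiple of the K\"ahler form of $N$ (since $\tau>0$). Therefore $c_1(\pi)\neq 0$ in $H^2(N;\R)$, a contradiction; hence $d=1$ and $\mathcal{L}(N)=G\cdot J_0$.

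Finally, in the maximal case $\dim_\R N=2n$ and the twistor space has real dimension $4n+2$, so $\dim\mathcal{L}(N)=2n+1=\tfrac12\dim_\R Z$; combined with the isotropy of $\mathcal{L}(N)$ (Ejiri--Tsukada), this yields that $\mathcal{L}(N)$ is Lagrangian. The main obstacle in the plan is the Codazzi-type computation producing $\alpha_2|_{TN}=\alpha_3|_{TN}=0$, which requires maximality and $n\geq 2$; once this vanishing is in hand, the identification of $\alpha_1$ as the $S^1$-connection form and the consequent non-triviality of $c_1(\pi)$ are routine.
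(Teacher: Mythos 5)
Your overall architecture is sound and, at the decisive step, runs parallel to the paper's: both proofs reduce to showing that $G_p^0$ cannot act trivially on the twistor fiber, and both extract the contradiction from equation (1.2), which says that the curvature of the circle bundle $\mathcal{L}(N)\to N$ restricted to $TN$ is the nonzero multiple $-\tfrac{\tau}{4n(n+2)}\omega_1$ of the K\"ahler form. Where you argue topologically (a $G$-equivariant multisection forces the real first Chern class to vanish, against the nonvanishing of $[\omega_I]$ on a compact K\"ahler orbit), the paper argues Lie-theoretically: the invariant $1$-form $\alpha=\alpha_1$ lives on $G^{ss}/(G^{ss}_p)^0$ whose isotropy has maximal rank (Lemma 3.1), hence $\alpha=0$ and $d\alpha=0$ contradicts (1.2). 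Both routes are legitimate; yours additionally needs the circle bundle to be oriented, which is not automatic since $I$ is only defined up to sign (cf.\ Remark 2), though this is repaired by passing to the complex lift.

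The genuine gap is in your derivation of $\alpha_2|_{TN}=\alpha_3|_{TN}=0$. You assert that $\tilde A_\C$ is symmetric, but symmetry of $h$ (hence of $\tilde A$) does \emph{not} pass to the $I$-linear part in the second variable: the splitting of $\tilde A(X,\cdot)$ into $I$-linear and $I$-antilinear pieces is not compatible with the symmetry in $(X,Y)$. If you actually impose $\tilde A(X,Y)=\tilde A(Y,X)$ on the identity $\tilde A(X,IY)+I\tilde A(X,Y)=\alpha_3(X)Y+\alpha_2(X)IY$ and compare coefficients for $X,IX,Y,IY$ independent, all you obtain is $\alpha_2|_{TN}=\alpha_3\circ I|_{TN}$, i.e.\ that the form $\alpha_2-i\alpha_3$ is $I$-antilinear on $TN$ --- strictly weaker than vanishing. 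Since $\tilde A_\C(X,Y)=\tfrac12(\alpha_2(X)-i\alpha_3(X))\cdot Y$, the statement ``$\tilde A_\C$ is symmetric'' is in fact \emph{equivalent} to the conclusion $\alpha_2|_{TN}=\alpha_3|_{TN}=0$, so as written the step is circular. The conclusion itself is true: it is Tsukada's result that $\nabla I|_N=0$ along a totally complex submanifold of dimension at least $4$, which the paper simply invokes. To prove it one must use the symmetry of the shape operator as well: from $A_{J_2Z}X=\alpha_3(X)IZ-J_2h(X,Z)$ one gets $g(h(X,Y),J_2Z)-g(h(X,Z),J_2Y)=\alpha_3(X)\,\omega(Z,Y)$, and the cyclic sum over $X,Y,Z$ (using symmetry of $h$ in its two arguments) yields $\alpha_3\wedge\omega=0$ on $TN$, whence $\alpha_3|_{TN}=0$ because wedging with the nondegenerate $\omega$ is injective on $1$-forms when $\dim_\R N\geq 4$; the same argument with $J_3$ gives $\alpha_2|_{TN}=0$. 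With that lemma either proved this way or cited, the rest of your argument goes through.
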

\begin{proof} We fix $p\in N$. We know there is a suitable neighborhood $U$ of $p$ and a
local section $I$ of $Z|_U$ such that $I(TU) = TU$; we also fix $J\in Z_p$, a complex structure
that is orthogonal to $I_p$ and $J(T_pN)\subseteq (T_pN)^\perp$. \\
We first observe that $G_p\cdot I_p=\pm I_p$, simply because $\pm I_p$ are the only complex structures in $Z_p$ that
preserve $T_pN$. Now we know that the $G$-orbit $G\cdot J$ is contained in $\mathcal L(N)$; we claim that
$\dim G\cdot J = \dim N + 1$, hence that $G\cdot J = \mathcal L(N)$. It is enough to show that $\dim G_p - \dim G_J = 1$
or equivalently that $\dim G_p\cdot J = 1$.
It is clear that $\dim G_p\cdot J \leq 1$ because the $G_p$ action on the
twistor line $Z_p$ is not transitive since $G_p\cdot I_p = \pm I_p$.\\
Suppose now that $(G_p)^o\cdot J = J$: this means that the orbit $G\cdot J$ covers at most two to one $N$ via
$\pi$ and therefore we can find a local section $\tilde J$ of
$\mathcal L(N)$, regarded as a $S^1$-bundle over $N$, extending $J$ such that
${\textit L}_X \tilde{J}= 0$ for every Killing vector field $X\in \g$; then using $\nabla I|_N = 0$ and
equation (1.1), we see that $\nabla \tilde{J} = \alpha\otimes I\tilde{J}$
for some locally defined $1$-form $\alpha$. This local one form $\alpha$ satisfies ${\textit L}_X \alpha= 0$ for
every $X\in \g$, hence in particular it extends to an invariant one form on the homogeneous space $G^{ss}/(G^{ss}_p)^o$.
Since $G^{ss}_p$ has maximal rank in $G^{ss}$ by Lemma \ref{centralizer}, we see that $\alpha = 0$; this contradicts equation (1.2), since on $N$ we would have $0 = d\alpha =-\frac{\tau}{4n(n+2)} \omega$,
where $\omega$ is the fundamental two-form on $N$ relative to
$I$.
\end{proof}

\begin{remark}
{\rm It is shown in \cite{CAG} that the existence of a Lagrangian $G$-orbit in a K\"ahler manifold $Z$
with $h^{1,1}(Z)=1$ is equivalent to the fact that the complexified group
$G^\C$ acts on $Z$ with a open Stein orbit. Thus, for example, in
order to classify the MTC orbits of a Wolf
space $K/H\cdot \Sp(1)$, since the twistor space satisfies the
cohomological condition, one can use the following strategy: Look for
the closed subgroups $G$ of $K$ whose complexification has a open Stein
orbit in $Z=K/H\cdot T^1$; find the Lagrangian orbit $\mathcal{L}$
using moment map techniques as suggested e.g. in \cite{CAG} and
project it down in the Wolf space $K/H\cdot \Sp(1)$.   \\
This procedure is easier in the case of the quaternion projective
space since here the twistor space is the complex projective space and
there are various classification results on complex {\em
  pre-homogeneous vector spaces} (see e.g. \cite{SK}).
If $G$ is simple one could in principle use the classification of Lagrangian
orbits of $\C\P^n$ given in \cite{CAG}.}
\end{remark}

\section{Homogeneous MTC submanifolds of $\H\P^n$: proof of
Theorem \ref{main}}
Let $G$ be a compact connected Lie group of isometries acting on
$\H\P^{n}=Sp(n+1)/\Sp(n)\cdot\Sp(1)$ ($n\geqslant 2$) with a MTC orbit $N$.
Such an action is induced by a complex $(2n+2)$-dimensional linear
representation of quaternionic type $\rho: G \to \Sp(V)$ where
$V\cong\C^{2n+2}\cong\H^{n+1}$ is a quaternionic vector space.
As already observed we can restrict our investigation to compact semisimple
Lie groups.\\
The proof will be divided into cases.
\subsection{Case 1: $\rho$ irreducible and $G$ simple}
In order to have a MTC $G$-orbit
the following dimensional condition must be satisfied:
\begin{equation} \label{dim1}\dim G-\rk G\geqslant
  \frac{1}{2}\dim_\R\P_\H(V)=\frac{1}{2}(\dim_\R V -4)=\dim_\C V-2.
\end{equation}
If $\rk G=1$ then $G=\SU(2)$, so that $\dim_\C V=4$ and
we will treat this case separately.
If $\rk G\geqslant 2$, then (\ref{dim1}) implies that $\dim_\C V \leq \dim G$ and
we can use the well known classification of
such irreducible representations (see
e.g. \cite{Fu} p. 414). Going through this list, we easily see that
the representations of quaternionic type that satisfy condition (\ref{dim1})
are the following:
\begin{table}[h]

\begin{tabular}{|c|l|}
\hline  $G$      &  $\rho$\\
\hline
\hline  $\SU(2)$ &  $\Lambda_1$  \\
\hline  $\SU(6)$ & $\Lambda_3$  \\
\hline  $\Sp(3)$  & $\Lambda_3$ \\
\hline  $\Spin(11)$  & $\Lambda_4$ \\
\hline  $\Spin(12)$  & $\Lambda_5$ \\
\hline  $\E_7$  & $\Lambda_1$ \\
\hline

\end{tabular}\medskip
\caption{$G$ simple and $\rho$ irreducible}
\end{table}\par
Note that the actions of $\Spin(11)$ and $\Spin(12)$ on $\C\P^{31}$
via spin and half-spin representations respectively share the same
orbits. This can be easily seen noting that the half-spin representation of
$\Spin(12)$ extends the spin representation of $\Spin(11)\subset
\Spin(12)$ and checking that these actions share the same cohomogeneity.
\subsection{Case 2: $\rho$ irreducible and $G$ non simple}

Fix $p \in N$ and a maximal torus $T$ which fixes $p$ ; we decompose
$V=\bigoplus_{\mu \in \Lambda} V_\mu$, where $\Lambda$ is the set of
weights relative to $T$ and $V_\mu$ is the weight space relative to
$\mu\in\Lambda$; we denote by $\lambda$ the highest weight.\\
The complex orbit $\widehat{N}$ in the twistor space
$Z=\C\P^{2n+1}=\P_\C(V)$ is the orbit through the class of a highest weight vector
$v_\lambda$. Thus the tangent space to $Z$ at $z:=[v_\lambda]$, as a
$T$-module, is $(\C v_\lambda)^*\otimes (\C v_\lambda)^\perp$, hence it has
$\{-2\lambda\}\cup\{-\lambda + \mu,\ \mu\neq\pm\lambda\}$ as weights.
On the other hand, since $\widehat{N}$ is horizontal, we have the
$T$-invariant splitting
\begin{equation}
\label{split}
V_{-\lambda}\otimes(\bigoplus_{\mu\neq\lambda}V_\mu) \cong T_z Z=\mathcal{V}_z \oplus \, T_z\,Gz \oplus \,
((T_z\,Gz)^\perp \cap \mathcal{H}_z).
\end{equation}
where $\pi_*$ gives a $G_p$-isomorphism of $T_z\,G z$ with
$T_p\,Gp$ and of $(T_z\,G z)^\perp \cap \mathcal{H}_p$ with
$J(T_p\,Gp)$, where $J$ is a fixed element of $Z_p$ anticommuting with $z$.\\
Let us see how $T$ operates on the three modules in the right-hand side of \eqref{split}. On
$\mathcal{V}_z$ the weight is $-2\lambda$, since the twistor line
$Z_p$ is the projectivization of
$\spann_\C\{v_\lambda,v_{-\lambda}\}$. The weights of $T$ acting on
$T_zGz$, endowed with the $\mathcal{J}_z$-complex structure, are roots
by the standard theory of invariant complex structures on generalized complex
manifolds (see e.g.\cite{Besse}).
As for the third module we have to compute the action of $T$ on $J$:
the torus $T$ acts on $Z_p\cong S^2$ fixing $z$ with infinitesimal action on
$T_zZ_p$ given by the weight $-2\lambda$; it follows that $T$ acts by
rotations on the set of complex structures orthogonal to $I_p$ by the
same weight $-2\lambda$, i.e. $[H,J]=2i\lambda(H)IJ$.
Therefore if $v$ is a weight vector in the complex module $T_zGz$
relative to a root $\alpha$, we have
\begin{eqnarray*}
HJv & = & JHv + [H,J]\,v \, =\, J(\alpha(H)v)+2i \lambda(H)IJv\\
           & = & -(\alpha(H)+2\lambda(H))Jv,
\end{eqnarray*}
where we have used the fact that every root is $i\mathbb R$-valued on $\mathfrak t$.\\
Thus the weights of $T$ acting on $(T_z\,G z)^\perp \cap \mathcal{H}_p$ are of the form $-2\lambda-\alpha$ where $\alpha$ is a root.
We now compare the weights of $T$ acting on $T_zZ$ using \eqref{split}: for every
weight $\mu\in \Lambda$ with $\mu\neq\lambda,-\lambda$, we have that
$\mu-\lambda$ is either a root
or is of the form $-2\lambda-\alpha$
where $\alpha$ is a root; we deduce that every weight
$\mu\neq\lambda,-\lambda$ is of the form
\begin{equation}\label{pesi}
\mu=\alpha\pm\lambda
\end{equation}
where $\alpha$ is a root of $\mathfrak{g}^\C$.\par
We now split $G=G_1\times\cdots\times G_s$ and accordingly
$V$ as the tensor product $V_1\otimes \cdots \otimes V_s$ where
$V_i$ is of quaternionic type for $i=1,\ldots,2r+1$
and of real type whenever $i=2r+2,\ldots,s$.
Now we have the following
\begin{lemma} In the above situation
\begin{itemize}
\item[1.] For every $i=1,\ldots,2r+1$ we have $V_i \cong \C^2$ and $G_i = \SU(2)$;
\item[2.]$G$ has at most three simple factors, i.e. $s\leq 3$;
\item[3.]If $s=3$, then $G_i=\SU(2)$ and it acts on $V_i\cong\C^2$ via the standard representation for every $i=1,2,3$.
\end{itemize}
\end{lemma}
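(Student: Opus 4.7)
The idea is to exploit the constraint \eqref{pesi} on the weights of $V$ together with the tensor product structure $V = V_1 \otimes \cdots \otimes V_s$. Writing $\lambda = \lambda_1 + \cdots + \lambda_s$ with $\lambda_i$ the highest weight of $V_i$, and discarding any trivial tensorand so that every $\lambda_i \neq 0$ and every $V_i$ carries weights different from $\lambda_i$, the essential observation is that the roots of $\mathfrak{g}^\C = \bigoplus_i \mathfrak{g}_i^\C$ lie in exactly one summand $\mathfrak{t}_i^*$: any covector with nonzero projections in two distinct summands cannot be a root.

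I would fix distinct indices $i, j$, choose weights $\mu_i' \neq \lambda_i$ of $V_i$ and $\mu_j' \neq \lambda_j$ of $V_j$, and test \eqref{pesi} on the weight
\[
\mu = \lambda + (\mu_i' - \lambda_i) + (\mu_j' - \lambda_j),
\]
which is distinct from $\pm\lambda$ (automatic when $s \geq 3$; for $s = 2$ one only has to avoid the pair $(\mu_i',\mu_j') = (-\lambda_i,-\lambda_j)$). The difference $\mu - \lambda$ has nonzero projections in both $\mathfrak{t}_i^*$ and $\mathfrak{t}_j^*$ and so is not a root; hence by \eqref{pesi} the sum
\[
\mu + \lambda = (\mu_i' + \lambda_i) + (\mu_j' + \lambda_j) + 2\sum_{k \neq i,j} \lambda_k
\]
must be a root. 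For $s \geq 4$ the tail $2\sum_{k \neq i,j}\lambda_k$ has nonzero components in at least two distinct $\mathfrak{t}_k^*$, preventing rootness; this contradiction proves assertion~2. For $s = 3$ the tail equals $2\lambda_k$ with $k$ the unique index outside $\{i,j\}$, and $\mu+\lambda$ can only be a root of $G_k$, which in turn forces $\mu_i' = -\lambda_i$ and $\mu_j' = -\lambda_j$. Since this must hold for every non-highest $\mu_i'$ (picking any $\mu_j' \neq \lambda_j$), every $V_i$ has weights exactly $\{\pm\lambda_i\}$, so $V_i \cong \C^2$ with $G_i = \SU(2)$ acting by the standard representation; this is assertion~3, and since the standard $\SU(2)$-representation is of quaternionic type, it also gives assertion~1 in the case $s = 3$.

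For $s = 2$ the tail disappears and the root condition collapses to $\mu_i' = -\lambda_i$ or $\mu_j' = -\lambda_j$; consequently at least one of $V_1, V_2$ has only $\pm\lambda_i$ as weights and is therefore $\C^2$ carrying the standard $\SU(2)$-representation. Since this representation is of quaternionic type while the real factor of the decomposition is, by hypothesis, not, this two-dimensional factor must be the quaternionic tensorand $V_1$, which completes assertion~1. I expect the main obstacle to be precisely this last real-versus-quaternionic identification: the weight-theoretic argument only pinpoints the existence of a two-dimensional $\SU(2)$-factor, and a separate bookkeeping of the real/quaternionic types of the tensorands is required to place it in the correct slot of the decomposition.
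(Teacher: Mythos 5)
Your proof is correct and takes essentially the same route as the paper's: both arguments play the weight condition \eqref{pesi} off against the fact that a root of $\mathfrak{g}^\C=\bigoplus_i\mathfrak{g}_i^\C$ lives in a single simple summand (the paper perturbs the complementary tensorand $\widetilde V_k$ away from $\pm\widetilde\lambda_k$, you perturb two individual tensorands away from their highest weights, but the computations are interchangeable and yield the same bound $s\le 3$). The final real-versus-quaternionic bookkeeping you flag as the delicate point is also exactly how the paper pins down the two-dimensional factor, via the observation that the complement of a quaternionic tensorand is a nontrivial real-type irreducible module and hence has dimension at least three.
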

\begin{proof}
1. We fix $1\leq k\leq s$ and put $\widetilde{G}_k = \prod_{j\neq
  k}G_j$ and $\widetilde{V}_k=\bigotimes_{j\neq i}V_j$.
 Denote by
$\lambda_k$ and $\widetilde{\lambda}_k$ the relevant highest weights, so
that the highest weight of $V$ will be $\lambda_k+\widetilde{\lambda}_k$.
Recall that every weight of $V$ is of the form $\mu+\nu$ where
$\mu$ is a weight of $V_k$ and $\nu$ is a weight of $\widetilde{V}_k$,
whilst a root of $\mathfrak{g}^\C$ is either a root of $\mathfrak{g}^\C_k$
or a root of $\widetilde{\mathfrak{g}}^\C_k = \bigoplus_{j\neq k}\mathfrak{g}^\C_j$. \\
Suppose now that $\dim_\C \widetilde V_k \geq 3$; then there is
a weight $\nu_0\neq\pm\widetilde{\lambda}$ of $\widetilde{V}$; then
\eqref{pesi} becomes
\begin{equation}
\label{pesi2}
\mu+\nu_0=\pm(\lambda_k+\widetilde{\lambda}_k) + \; {\rm root} \; {\rm
  of} \; \mathfrak{g}^\C
\end{equation}
for arbitrary $\mu$ and it implies that either $\mu=\lambda_k$ or
$\mu=-\lambda_k$, i.e. $V_k \cong \C^2$ and for $1,\ldots,2r+1$, $G_i = \SU(2)$ acts on
$V_i$ in the standard way.\\ In particular, this holds when $k\leq 2r+1$ since $\widetilde V_k$ is
 irreducible and of real type, hence its dimension is at least three. \\
2. Using the same notations as above, we put $k=1$ and use \eqref{pesi2} with $\mu=-\lambda_1$ and $\mu=\lambda_1$;
we get  $\nu_0= -\widetilde{\lambda}_1+\alpha= \widetilde{\lambda}_1+\beta$
where $\alpha$ and $\beta$ are two roots of
$\widetilde{\mathfrak{g}}^\C$.
Thus $\widetilde{\lambda}_1=\frac{1}{2}(\alpha-\beta)$ so that
$\widetilde{G}_1$ has at most two simple factors.\\
3. If $s=3$, for every $1\leq k\leq 3$, we have $\dim_\C\widetilde V_k\geq 4$ and therefore, by
the results in (1), $G_k=\SU(2)$ and $V_k=\C^2$.\end{proof}

Therefore we are left with the case where $G = \SU(2)\times G'$, $V=\C^2\otimes V'$ and the group $G'$ is
simple, acting linearly on $V'$ via an irreducible representation $\rho'$ of real type.

To treat this case note that the dimensional
condition \eqref{dim1} becomes
\begin{equation} \label{dim2}\dim G'-\rk G'\geqslant
  \frac{1}{2}(\dim_\R V'\otimes {\C}^2)-2=2\dim_\C V'-2.
\end{equation}
Thus if $\rk G'\geqslant 2$, equation (\ref{dim2}) shows that $\dim
G'\geq 2\dim_\C V'$; hence using the above mentioned list of
\cite[p. 414]{Fu}, one sees that $(G',\rho')$ is forced to be
isomorphic to $(\SO(n+2),\Lambda_1)$ with
$n\geqslant 3$,
$(\Spin(7),\Lambda_3)$ or to $(\G_2,\Lambda_1)$; if $\rk G'=1$,
then $\dim_\C V_1=3$ and we get that $\rho'$ is the adjoint
representation of $\SU(2)$.
We collect these cases in the following table:
\begin{table}[h]
\label{nsg}
\begin{tabular}{|c|l|}
\hline                    $G$\   & \quad $\rho$\\
\hline
\hline          $\SO(n+2)\times\SU(2)$ &  $\Lambda_1\otimes\Lambda_1$  \\
\hline          $\Spin(7)\times\SU(2)$ &  $\mbox{spin}\otimes\Lambda_1$  \\
\hline              $\G_2\times\SU(2)$ &  $\Lambda_1\otimes\Lambda_1$  \\
\hline            $\SU(2)\times\SU(2)$ &  $(2\Lambda_1)\otimes\Lambda_1$  \\
\hline
\end{tabular}\medskip
\caption{Non simple groups}
\end{table}\\
All the cases in Table 3 give rise to
MTC orbits which turn out to be parallel by Theorem \ref{Tsu}, (1).\\
Note that the group $\G_2\times\SU(2)$ has the
same totally complex orbit of $\SO(7)\times\SU(2)$ and this holds also
for  $\Spin(7)\times\SU(2)$ with $\SO(8)\times\SU(2)$: this follows from the fact that
the groups $\G_2$ and $\Spin(7)$ act transitively on the quadrics $\SO(7)/\SO(2)\times\SO(5)$ and
$\SO(8)/\SO(2)\times\SO(6)$ respectively (see e.g. \cite{O}).

\subsection{Case 3: $\rho$ reducible.}
Let $V$ be a reducible $G$-module with an invariant quaternionic structure, given by an invariant anti-linear
map $q$ with $q^2 = -{\rm id}$. We then decompose $V$ as
\begin{equation}\label{V}V = \bigoplus_{i=1}^k V_i \oplus \bigoplus_{i=1}^s(W_i \oplus W_i^*),
\end{equation}
where $V_i$ are $q$-stable irreducible $G$-modules, while $W_i$ are
$G$-irreducible and $q(W_i)=W_i^*$;
here we allow the indices $k$ and $s$ to be zero, meaning
that there are no summands of the corresponding type.\\
We now fix $p\in N \subset \P_\H(V)$ and $[v]\in {\widehat N} \subset \P_\C(V)$ a point of the complex lift of
$N$ lying over $p$. Since $G\cdot [v]$ is complex, we see
that $v$ can be written (up to a reordering) as
\begin{equation}
\label{dec}
v = \sum_{i=1}^hv_i + \sum _{i=1}^t (w_i + w_i^*),\quad 0\neq v_i\in V_i\quad
0 \neq w_i + w_i^*\in W_i+W_i^*,
\end{equation}
where $h\leq k$, $t\leq s$; every component in the decomposition above
is a highest weight vector (w.r.t. the Cartan subalgebra
$\frak t^\C$ and a suitable choice of positive roots $R^+$):
indeed, if $G\cdot[v] = G^\C/P$ where $P$ is a parabolic subgroup whose Lie algebra
 contains $\frak t^\C$ and all positive root spaces $\g_\alpha$ for $\alpha\in R^+$, then $\frak t^\C\cdot v\in \C\cdot v$
and $\g_\alpha\cdot v = 0$ for all $\alpha\in R^+$. Moreover, since $\frak t^\C\cdot v\in \C\cdot v$, we see that all these highest weights are
equal and we have the following
\begin{lemma} (1)\ $V_1\cong\ldots\cong V_h$; \quad (2)\ for every $1\leq i,j\leq t$ either $W_i\cong W_j$ or
$W_i\cong W_j^*$;\quad (3)\ if $h\geq 1$ and $t\geq 1$, then $W_1\cong\ldots\cong W_t\cong V_1$
\end{lemma}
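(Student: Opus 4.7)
The plan rests on the observation recorded just before the lemma: every nonzero component of the decomposition \eqref{dec} is a highest weight vector of its irreducible summand, and (since $\mathfrak{t}^\C$ acts by a single scalar on $\C\cdot v$) all of these highest weight vectors share one common weight $\lambda$. Once this is in hand, all three claims reduce to the classical fact that an irreducible complex $G$-module is determined up to isomorphism by its highest weight.

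For part (1), each $v_i\neq 0$ ($1\leq i\leq h$) is a highest weight vector of $V_i$ of weight $\lambda$, so every $V_i$ has $\lambda$ as its highest weight, hence $V_1\cong\cdots\cong V_h$. For part (2), I would argue as follows. Fix $i\leq t$; since $w_i+w_i^*\neq 0$, at least one of $w_i$, $w_i^*$ is nonzero, and correspondingly either $W_i$ has highest weight $\lambda$ or $W_i^*$ has highest weight $\lambda$, equivalently $W_i$ has highest weight $-w_0\lambda$, with $w_0$ the longest Weyl group element. Thus every $W_i$ is isomorphic to one of two fixed irreducible modules $U$ (highest weight $\lambda$) or $U^*$ (highest weight $-w_0\lambda$). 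For any pair $i,j$, either $W_i$ and $W_j$ lie on the same side, giving $W_i\cong W_j$, or they lie on opposite sides, giving $W_i\cong W_j^*$ via double duality.

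Finally for (3), if $h\geq 1$ then $V_1$ is $q$-stable, so $q|_{V_1}$ is a $G$-invariant anti-linear map with $q^2=-\id$, endowing $V_1$ with a quaternionic structure; this forces $V_1\cong V_1^*$ and hence $-w_0\lambda=\lambda$. Consequently $U\cong U^*$, the two alternatives in (2) collapse, and every $W_i$ is isomorphic to $V_1$.

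The main potential pitfall lies in (2), in carefully separating the two possible \emph{sides} of a pair $W_i\oplus W_i^*$: one must resist the temptation to assume both $w_i$ and $w_i^*$ are nonzero and explicitly account for the possibility that only one contributes. Once this is handled by translating between the highest weights of a module and its dual, the lemma follows with no further calculation.
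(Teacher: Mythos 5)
Your proposal is correct and follows essentially the same route as the paper: the paper's (very terse) proof likewise derives (1) and (2) from the fact that all components of $v$ are highest weight vectors of a common weight, and (3) from the self-conjugacy of the quaternionic-type module $V_1$. You have merely filled in the details (the $-w_0\lambda$ bookkeeping for duals) that the paper leaves implicit.
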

\begin{proof} The first two claims follow from the discussion above, while (3) is a consequence of the fact that
$V_1$ is of quaternionic type, hence self-conjugate.\end{proof}
We can therefore rewrite the decomposition \eqref{V} as
\begin{equation} V = kV_1 \oplus s(W\oplus W^*),\end{equation}
for some nonnegative integers $k,s$ and some irreducible $G$-modules $W$ and $V_1$ (of quaternionic type).
We now prove the following
\begin{lemma} We have $h=k$ and $t=s$.
\end{lemma}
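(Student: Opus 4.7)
The plan is to argue by contradiction, using the observation that if some isotypic component of $V$ does not contribute to the weight-vector decomposition of $v$, then the orbit $N$ gets confined to a proper totally geodesic quaternionic projective subspace of $\H\P^n$, which cannot accommodate a totally complex submanifold of the required real dimension $2n$.

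Suppose first that $h<k$, so that there is a copy $V_1^{(0)}$ of $V_1$ in the decomposition $V=kV_1\oplus s(W\oplus W^*)$ whose $v$-component vanishes. The key observation is that $V_1^{(0)}$ is $q$-stable because $V_1$ is of quaternionic type; consequently its $G$-invariant complement $V''\subset V$ is both $G$-invariant and $q$-invariant, hence a quaternionic $G$-subspace. Since $v\in V''$ and $V''$ is $G$-stable, the entire orbit satisfies $N\subset\P_\H(V'')$, and $\P_\H(V'')$ is a totally geodesic quaternionic projective subspace of $\H\P^n$ of quaternionic dimension $n':=\dim_\H V''-1<n$.

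Next I would verify that $N$ remains totally complex as a submanifold of $\P_\H(V'')$: since $\P_\H(V'')$ is a quaternionic submanifold, the bundle $Q$ restricts to its intrinsic quaternionic bundle, so any local adapted basis $(J_1,J_2,J_3)$ for $N$ in $\H\P^n$ restricts to one in $\P_\H(V'')$ because each $J_\ell$ preserves $T\P_\H(V'')$; the orthogonality $J_\ell(TN)\subset TN^\perp$ for $\ell=2,3$ is preserved, as $J_\ell(TN)\subset T\P_\H(V'')$. The defining inequality for a totally complex submanifold then forces $\dim_\R N\leq \frac{1}{2}\dim_\R \P_\H(V'')=2n'<2n$, contradicting the fact that $N$ is MTC in $\H\P^n$ and therefore has real dimension $2n$.

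The case $t<s$ is treated identically, using that every pair $W_i\oplus W_i^*$ is $q$-stable by construction of (\ref{V}); the pair one discards plays the role of $V_1^{(0)}$ above. The only even mildly delicate point is checking that the totally complex condition descends to $\P_\H(V'')$, but this is immediate from the fact that $\P_\H(V'')$ is a totally geodesic quaternionic submanifold, so that $Q$ restricts to its intrinsic quaternionic structure and the orthogonality relation only becomes sharper. Once this is in hand the conclusion is purely a dimension count and requires no further input beyond the previous lemma.
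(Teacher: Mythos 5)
Your argument is correct, but it takes a genuinely different route from the paper. The paper derives the contradiction in the twistor space: by Proposition \ref{isoorbit} the isotropic lift $\mathcal L(N)$ is a Lagrangian $G$-orbit through a point $[w]$ of the quaternionic span of $v$, hence the $G^\C$-orbit of $[w]$ is open in $\P_\C(V)$; but if $h<k$ or $t<s$ that quaternionic span sits inside a proper invariant subspace $U$, so the orbit would be confined to $\P_\C(U)$ --- impossible. You instead stay downstairs in $\H\P^n$: you observe that the discarded summands can be chosen $q$-stable (each $V_i$ is $q$-stable and each $W_i\oplus W_i^*$ is $q$-stable by construction of \eqref{V}), so $v$ and its quaternionic span lie in a proper $G$- and $q$-invariant subspace $V''$, whence $N\subset\P_\H(V'')$, a totally geodesic quaternionic projective subspace of quaternionic dimension $n'<n$; since $Q$ preserves $T\P_\H(V'')$, the relation $J(T_yN)\subseteq (T_yN)^\perp\cap T_y\P_\H(V'')$ still holds and the injectivity of $J$ on $T_yN$ gives $2n=\dim_\R N\leq 2n'<2n$. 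Both proofs hinge on the same initial observation (confinement to a proper invariant subspace), but yours is more elementary: it needs only the pointwise dimension bound for totally complex submanifolds and avoids Proposition \ref{isoorbit} and the Lagrangian/open-complexified-orbit machinery altogether. The one point worth phrasing more carefully is the choice of $V''$: rather than ``the'' $G$-invariant complement of the discarded copy (which is not unique when the isotypic multiplicity exceeds one), simply take the sum of the remaining summands in the fixed decomposition \eqref{V}; this is manifestly $G$-invariant, $q$-stable, and contains $v$.
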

\begin{proof} Indeed suppose $h<k$ or $t<s$; this implies that the quaternionic span of $v$ is contained in a proper
$G$-invariant subspace $U$ of $V$. On the other hand by Proposition \ref{isoorbit} we know that there is a point $w$ in the quaternionic span of $v$ such that
the orbit $G\cdot[w]$ is Lagrangian; this means that the $G^\C$-orbit through $[w]$ is open in $\P_\C(V)$, while it
is contained in $\P_\C(U)$, a contradiction.\end{proof}
\begin{lemma} \label{final}One of the following holds :
\begin{enumerate}
\item \label{quat} $k=2$, $s=0$;\ $G = \Sp(m)$ and $V_1\cong \C^{2m}$ is the standard representation;
\item \label{cplx} $k=0$ and $s=1$;\ $G=\SU(m)$ or $G=\Sp(m)$ and $W$ is the standard
representation $W\cong \C^m$ or $W\cong \C^{2m}$ respectively.
\end{enumerate}
\end{lemma}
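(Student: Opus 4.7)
The plan is to combine the dimensional inequality \eqref{dim1}, the open $G^\C$-orbit condition implicit in Proposition \ref{isoorbit}, and the structure forced by the previous lemma, in order to reduce to two cases and then to the claimed list.

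First, I would observe that the cases $k\geq 1$ and $s\geq 1$ cannot occur simultaneously. Indeed, the previous lemma gives $W\cong V_1$ as $G$-modules in this situation; but $V_1$ is $q$-stable irreducible and therefore self-dual, whereas $W$ and $W^*$ appear as separate summands in the decomposition of $V$, which forces $W\not\cong W^*$ as $G$-modules (otherwise the summand $W\oplus W^*$ would have been grouped among the $q$-stable $V_i$'s). This contradicts $W\cong V_1\cong V_1^*\cong W^*$, so either $s=0$ and $V=kV_1$ with $k\geq 2$ (by the reducibility of $\rho$), or $k=0$ and $V=s(W\oplus W^*)$ with $s\geq 1$.

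Next, I would exploit Proposition \ref{isoorbit}: since $\mathcal{L}(N)$ is a Lagrangian orbit of $G$ in $Z=\P_\C(V)$, the standard moment-map argument (cf.\ Remark 3.3 and \cite{CAG}) implies that the complexified group $G^\C$ has an open Stein orbit in $\P_\C(V)$; equivalently, $V$ is a prehomogeneous vector space for $G^\C\times\C^\times$. Together with the dimensional bound $\dim G-\rk G\geq\dim_\C V-2$ from \eqref{dim1}, this narrows the possibilities considerably.

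In the case $V=kV_1$ with $k\geq 2$, I would decompose $G$ into simple factors and use the classification of semisimple prehomogeneous vector spaces \cite{SK} to check that the only quaternionic irreducible representation whose $k$-fold direct sum remains prehomogeneous is the standard representation $\C^{2m}$ of $\Sp(m)$; the dimensional inequality then forces $k=2$. In the case $V=s(W\oplus W^*)$ with $s\geq 1$, an analogous argument forces $s=1$ and reduces $(G,W)$ to either $(\SU(m),\C^m)$ or $(\Sp(m),\C^{2m})$ with $W$ the standard representation. The main obstacle lies in this enumeration step: a careful scan of the prehomogeneous tables, compatible with both the dimensional inequality and the irreducibility of $V_1$ (or $W$), is needed to exclude competing candidates; the fact that the ranks and dimensions involved are tightly constrained makes this a finite, if somewhat delicate, check.
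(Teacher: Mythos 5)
There are two genuine gaps. First, your opening step is based on a false claim: from the fact that $W$ and $W^*$ occur as separate summands you infer $W\not\cong W^*$ as abstract $G$-modules, ``otherwise $W\oplus W^*$ would have been grouped among the $q$-stable $V_i$'s''. The decomposition \eqref{V} sorts submodules by whether the \emph{specific subspace} is $q$-stable, not by abstract self-duality: an irreducible $W$ of real type is self-dual, yet the induced antilinear structure on $\Hom_G(W,\,W\oplus q(W))$ squares to $-\id$, so $W\oplus q(W)$ admits no $q$-stable irreducible summand and legitimately appears as a pair $W\oplus W^*$. Worse, your claim contradicts conclusion (\ref{cplx}) of the very lemma you are proving: for $G=\Sp(m)$ the module $W=\C^{2m}$ is self-dual, and indeed $2\C^{2m}$ with the diagonal quaternionic structure and $\C^{2m}\oplus(\C^{2m})^*$ with the swapping structure are $\Sp(m)$-equivariantly isomorphic (this is noted explicitly right after the lemma in the paper). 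The dichotomy you want ($k\geq1$ and $s\geq 1$ cannot coexist) is true, but it must be obtained differently; the paper gets it from a dimension count, since $h\geq1$ and $t\geq 1$ force $W_i\cong V_1$ and hence $\dim_\C V\geq 3\dim_\C V_1$, incompatible with the bound $\dim_\C V\leq 2\dim_\C V_1$ described below.

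Second, your enumeration step is not actually carried out, and the tools you invoke do not suffice as cited. Sato--Kimura \cite{SK} classify \emph{irreducible} prehomogeneous vector spaces, whereas here $V$ is reducible by hypothesis; the classification of reducible PVs is a much larger body of work, and ``scanning the tables'' is not available in the form you need. Moreover the dimension inequality \eqref{dim1} alone does not force $k\leq 2$: for instance $V=3\C^{6}$ for $G=\Sp(3)$ satisfies $\dim G-\rk G=18\geq \dim_\C V-2=16$, so excluding it requires an explicit (pre)homogeneity computation, not a dimension count. The key idea you are missing is the one the paper uses: since all components $v_i$ of $v$ are highest weight vectors of the same weight, $N\cong G\cdot[v]$ has the same dimension as the orbit $G\cdot[v_1]\subseteq\P_\C(V_1)$, whence $\dim_\C V-2=\dim_\R N\leq \dim_\R\P_\C(V_1)=2\dim_\C V_1-2$. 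This immediately gives $k\leq 2$ and $s\leq 1$, and in the boundary cases forces $G$ to act \emph{transitively} on $\P_\C(V_1)$ (resp.\ $\P_\C(W)$), at which point Onishchik's classification of transitive actions on complex projective spaces \cite{O} yields exactly the two families in the statement, with no appeal to prehomogeneous vector spaces.
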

\begin{proof} Suppose $k\geq 1$. This means that the orbit $N$ is diffeomorphic to $G/K$ , where $K$ is the
stabilizer in $G$ of $[v_1]$. Since $N$ is MTC, we have
\begin{equation}\label{est}
\dim_\R \P_\C(V_1) = 2\dim_\C V_1 - 2 \geq \dim_\R G/K = \frac{1}{2}\dim_\R\P_\H(V)=\dim_\C V - 2,
\end{equation}
hence $\dim_\C V\leq 2\dim_\C V_1$. This means that $k\leq 2$. Suppose $k=2$: then $s=0$ and we have
the equality in \eqref{est}; this means that $G$ acts transitively on $\P_\C(V_1)$, hence $G=\Sp(n)$ because
$V_1$ is of quaternionic type (see e.g. \cite{O}). If $k=1$ and $s\geq 1$, then
$W_i\cong V_1$ and $\dim_\C V \geq 3\dim_\C V_1$, a contradiction.  \\
If $s\geq 1$, we repeat the same estimate as in \eqref{est} using $W_1$ (or $W_1^*$, according to whether $v$ has a
non zero component along $W_1$ or $W_1^*$) and we get that $s=1$ and $k=0$. Again we have that $G$ acts transitively
on the projective space $\P_\C(W)$ and the claim follows.\end{proof}
We now note that the cases listed in Lemma \ref{final} are actually possible and
correspond to $\C\P^{m-1}=\SU(m)/\S(\U(1)\times\U(m-1))\hookrightarrow \H\P^{m-1}$ and $\C\P^{2m-1}=\Sp(m)/T^1\times
\Sp(m-1)\hookrightarrow \H\P^{2m-1}$ ; observe that the $\Sp(m)$-modules $V=2\C^{2m}$, endowed with the quaternionic structure
that restricts to the standard quaternionic structure of each summand $\C^{2m}$, and $V'=\C^{2m}\oplus (\C^{2m})^*\cong
2\C^{2m}$ endowed with the quaternionic structure that interchanges the two summands, are $\Sp(m)$-equivariantly
isomorphic.

\section{Normal holonomy}
In this section we compute the normal holonomy of a MTC submanifold $N$ of $\H\P^n$.\\
Given a point $p\in N$ we choose a local
adapted basis $\{J_1:=I,\,J_2:= J,J_3\}$, i.e. sections over a suitable neighborhood $U$ of $p$ of the bundle
$Z$, such that $I$ induces a complex structure along $N$ and $J$ is a section of $\mathcal{L}(N)$ over $U$.\\
We denote by $\nabla^\perp$ the normal connection in the
normal bundle $TN^\perp$ and by $R^\perp$ its
curvature tensor. \\
Here we remark that the submanifold $N$ and all the connections
involved are real analytic, thus the relevant holonomy algebras are
generated by the endomorphisms given by the covariant derivatives of
the curvature tensors (see e.g. \cite{KN}, p.153).
In particular this implies that the restricted holonomy group can be
determined via local computations.\\
From now on we will work in the open subset $U$;
since $J$ can be seen as an isomorphism between
the bundles $TU$ and $TU^\perp$, we can pull back the normal
connection to $TU$ defining
\begin{equation}\label{def}
\nabla'_X (Y):=J^{-1}\nabla_X^\perp(J\,Y).
\end{equation}
Obviously ${\rm Hol}_p^0(\nabla^\perp)\cong{\rm Hol}_p^0(\nabla')$; we
will denote by $\h$ and $\h'$ the Lie algebras of ${\rm
  Hol}_p^0(\nabla)$ and ${\rm Hol}_p^0(\nabla')$ respectively. It is easy to show that (see e.g. \cite{KE})
\begin{equation}\label{conn}\nabla'_X Y = \nabla_XY -\alpha(X) IY,\end{equation}
where $\nabla$ is the Levi Civita connection of the induced metric on $N$ and $\alpha$ is a local one form such that
$\nabla J = \alpha\otimes IJ$ (see equation 1.1); moreover
\begin{equation}\label{curv}
R'_{XY}Z = R_{XY}Z + \frac{\tau}{4n(n+2)}\omega(X,Y)\ IZ,
\end{equation}
where $R'$ is the curvature tensor of the connection $\nabla'$ and $\omega$ is the local K\"ahler form of $I$.
We now observe that, using \eqref{conn} and the K\"ahler condition $\nabla I = 0$ on $U$,
$$\nabla\omega = \nabla'\omega = 0,$$
so that we can compute the covariant derivatives
$$\nabla'R'_{XY} = \nabla'R_{XY}$$
and again using \eqref{conn} we get
\begin{equation}\label{dercurv}
\nabla'R'_{XY} = \nabla R_{XY}\end{equation}
for every $X,Y\in T_pN$.
It follows from \eqref{curv}, \eqref{dercurv} and the real analyticity that
$\operatorname{Span}\{\h',I_p\} = \operatorname{Span}\{\h,I_p\}$.

\begin{lemma}
\label{hol2} $\h'$ contains a nontrivial center $\cc$.
\end{lemma}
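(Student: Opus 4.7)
The plan is to exhibit a concrete nonzero central element of $\h'$; specifically, I aim to show that $I_p\in\h'$. Since a direct check using $\nabla I=0$ on $N$ and the definition \eqref{conn} gives $\nabla' I=0$, we have $\h'\subseteq\u(T_pN)$, where $T_pN$ carries the complex structure $I_p$. As $I_p$ is central in $\u(T_pN)$, verifying $I_p\in\h'$ will imply that the center $\cc$ of $\h'$ contains $\R I_p$ and so is nontrivial.

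The key point is that the correction term $\nabla-\nabla'=\alpha\otimes I$ takes values in the central subspace $\R I\subseteq\u(T_pN)$. Consequently, $\nabla'$-parallel transport admits a clean factorization: if $\tilde s(t)$ is $\nabla$-parallel along a curve $\gamma(t)$ with $\gamma(0)=p$, then $s(t):=\exp\!\bigl(I\int_0^t\alpha(\dot\gamma)\,d\tau\bigr)\tilde s(t)$ is $\nabla'$-parallel. Hence $\tau'_\gamma=\exp\bigl(I_p\int_\gamma\alpha\bigr)\circ\tau_\gamma$ for any loop $\gamma$ at $p$, with the two factors commuting inside $U(T_pN)$.

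The argument then concludes by looking at small loops $\gamma$ bounding a disk $D$ at $p$. Stokes gives $\int_\gamma\alpha=\int_D d\alpha$, and a direct computation of the curvature of $\nabla-\alpha\otimes I$ (using $\nabla I=0$ and $[I,R_{XY}]=0$) yields $R'_{XY}=R_{XY}-d\alpha(X,Y)\,I$; comparing with \eqref{curv} forces $d\alpha=-c\omega$ on $N$, where $c=\tau/(4n(n+2))>0$. Since $\omega$ is nondegenerate, $\int_D\omega$ sweeps a continuum of nonzero values as $D$ varies, so the central phase $\exp(-c\,I_p\int_D\omega)$ traces out the whole one-parameter subgroup $\{\exp(tI_p):t\in\R\}$ inside $\operatorname{Hol}_p^0(\nabla')$. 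Therefore $I_p\in\h'$, and $\cc\supseteq\R I_p\neq 0$.

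The main subtlety is that $\alpha$ is only locally defined (a global primitive of $-c\omega$ would contradict compactness of $N$), but this is harmless: the argument is purely local around $p$, and by real analyticity $\h'$ is already generated by the infinitesimal holonomy of arbitrarily small loops at $p$, for which $\alpha$ is well-defined. An equivalent Ambrose--Singer bookkeeping uses \eqref{curv} directly: from $R'_{X,IX}(p)-R_{X,IX}(p)=c\|X\|^2 I_p$ combined with the identity $\operatorname{Span}\{\h',I_p\}=\operatorname{Span}\{\h,I_p\}$ established just before, one isolates a nonzero multiple of $I_p$ in $\h'$.
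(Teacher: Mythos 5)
Your setup is fine: $\nabla' I=0$ does follow from \eqref{conn} and $\nabla I|_N=0$, so $\h'\subseteq\u(T_pN)$ and $I_p$ is central there; the factorization $\tau'_\gamma=\exp\bigl(I_p\int_\gamma\alpha\bigr)\circ\tau_\gamma$ of parallel transports along small loops is correct, as is $d\alpha=-c\,\omega$ along $N$. The gap is in the last step. From $\tau'_\gamma=\exp(\theta(\gamma)I_p)\circ\tau_\gamma$ you cannot conclude that the central phase $\exp(\theta(\gamma)I_p)$ by itself lies in $\operatorname{Hol}^0_p(\nabla')$: the holonomy element is the \emph{product}, and the other factor $\tau_\gamma$ belongs to $\operatorname{Hol}^0_p(\nabla)$, not a priori to $\operatorname{Hol}^0_p(\nabla')$. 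To isolate the phase you would need a loop with trivial $\nabla$-holonomy but $\int_D\omega\neq0$, which you have not produced. The same objection applies to your ``Ambrose--Singer bookkeeping'': $R'_{X,IX}(p)\in\h'$ and $R_{X,IX}(p)\in\h$ differ by $c\|X\|^2I_p$, but $\h$ and $\h'$ are in general \emph{different} subspaces of $\u(T_pN)$ (they are only known to agree after adding $\R I_p$), so this identity puts no multiple of $I_p$ into $\h'$. Nothing in your argument excludes a configuration such as $\h=\R A$, $\h'=\R(A+I_p)$ with $A\in\su(T_pN)$, in which every relation you use holds yet $I_p\notin\h'$. Indeed the paper does \emph{not} prove $I_p\in\h'$ at this stage; that stronger statement is only obtained in the last lemma of Section 5, after the classification is available.

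What your curvature identity honestly gives is the weaker dichotomy the paper actually exploits: taking $\operatorname{Trace}(I\circ\,\cdot\,)$ in \eqref{curv}, either $\operatorname{Trace}(I\circ R'_{XY})\neq0$ for some $X,Y$ --- in which case $\h'\not\subseteq\su(T_pN)$, hence $\h'\neq[\h',\h']$ and, being a subalgebra of the compact algebra $\so(T_pN)$, $\h'$ has nontrivial center --- or else the Ricci form satisfies $2\rho=\frac{\tau}{2(n+2)}\omega$, i.e.\ the induced metric is K\"ahler--Einstein with a prescribed Einstein constant. Your proposal has no way to handle this second alternative, and that is where the substance of the paper's proof lies: Theorem \ref{Tsu}(b) then forces $N$ to be parallel of type (1), (3), (5)--(8); the reducible case (3) is disposed of because there $\h'$ is abelian; and the remaining cases are excluded by comparing the forced Einstein constant with the actual scalar curvatures of these immersions (via Takeuchi's computation for the complex lifts). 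You would need to supply an argument covering the K\"ahler--Einstein alternative before your proof closes.
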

\begin{proof}
Our claim will follow from the fact that $\h' \not\subseteq\su(T_pN)$: indeed this
implies that $\h' \neq [\h',\h']$ and therefore $\h'$, being a
subalgebra of $\so(T_pN)$, contains a nontrivial center $\cc$.

Suppose that $\h' \subseteq \su(T_pN)$; then
by \eqref{curv} we have (here $\rho$ denotes the Ricci form of
the induced K\"ahler metric on the adapted neighborhood $U$)
\begin{equation}
\label{Einsteinconstant}
0 = \operatorname{Trace}(I\circ R'_{XY}) = 2\rho(X,Y) - \frac{\tau}{2(n+2)}\omega(X,Y),
\end{equation}
hence $U$ is Einstein with constant scalar cuvature $\frac{n}{2(n+2)}\tau$.\\
Therefore $N$ is parallel and is locally
congruent to one of the cases (1), (3), (5), (6), (7) or (8) listed in Theorem
\ref{Tsu}.
We now observe that the only locally reducible case is (3): in this case $\h$
is abelian and therefore $\h'$ is also abelian by \eqref{curv}.
As for the remaining cases (1), (5)--(8), in order to get a contradiction, we need to
inspect more carefully the immersion $N \to \H\P^n$.
In case (1) $\C\P^n \hookrightarrow \H\P^n$ has scalar curvature given
by $\frac{n+1}{4(n+2)}\tau$, a contradiction since $n>1$.
As for the cases (5)--(8) note that the twistor projection $\pi$ restricted to the complex
lift $\hat{N}$ is a local isometry. By Theorem \ref{Tsu}
$\hat{N}$ is a Einstein homogeneous complex submanifold of
$\C\P^{2n+1}(c)$, where $c=\frac{\tau}{4n(n+2)}$; therefore
the result of Takeuchi \cite[Theorem 4.3]{Ta2} gives that the
scalar curvature of such immersions is $\frac{n}{6(n+2)}\tau$ (see
also \cite[Proposition 3.12]{AlMa}).
Comparing with \eqref{Einsteinconstant} we obtain the desired contradiction.
\end{proof}

{\it Proof of Theorem \ref{NH}, part (1)}.
We first prove that $\h=\u(T_pN)$ and therefore it contains $I_p$.
Indeed $N$ is locally irreducible by Theorem \ref{Tsu}, (c); thus using Berger's classification
of the holonomy of irreducible Riemannian manifolds, we see that either ($U,g,I$) is
Hermitian symmetric or $\h$ is isomorphic to $\sp(n)$,
$\su(n)$ or $\u(n)$. If it is a symmetric space or the
holonomy is properly contained in $\u(n)$, then ($U,g$) is Einstein
and therefore it is a parallel submanifold by Theorem \ref{Tsu}, (b).
It follows that $\h = \u(T_pN)$.\\
Since $[\cc,I_p]=0$ and $\h'$ together with $I_p$ span the whole
$\h=\u(T_pN)$, we see that $\cc$ centralizes
$\u(T_pN)$, hence $\cc = \mathbb R\cdot I_p$. Therefore $\h'=\h\cong
\u(n)$.\\

{\it Proof of Theorem \ref{NH}, part (2)}.
We represent the parallel MTC submanifold $N$ as a symmetric space
$G/K$. Let $i:K\to \O(T_pN)$ and $\nu:K\to \O(T_pN^\perp)$ be respectively the isotropy
and slice representations at $p$. Since $\hat{N}=G/K^0$ is a Hermitian
symmetric space one has $i(K^0)=\operatorname{Hol}^0_p(\nabla)$.
\begin{lemma}
$\operatorname{Hol}^0_p(\nabla^\perp)$ is contained in $\nu(K)$.
\end{lemma}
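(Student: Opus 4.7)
My plan is to move the computation from the normal bundle to the tangent bundle of $N$ via the isomorphism $J_p$, exploiting the pullback connection $\nabla'$ introduced in \eqref{def} and the Hermitian symmetric structure of $N$ furnished by Theorem \ref{Tsu}.

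Since $N$ is parallel in the symmetric space $\H\P^n$, the second fundamental form is parallel and the ambient curvature is parallel; the Ricci equation then gives $\nabla^\perp R^\perp=0$, which via \eqref{conn} and \eqref{dercurv} yields $\nabla' R'=0$. Ambrose-Singer therefore shows that $\h'$ is generated by the values of $R'_p$ on $\Lambda^2 T_pN$. By \eqref{curv},
\[
R'_p(X,Y)=R_p(X,Y)+\frac{\tau}{4n(n+2)}\,\omega(X,Y)\,I_p.
\]
Writing $N=G/K$ as a Hermitian symmetric space with Cartan decomposition $\g=\mathfrak k\oplus\mathfrak m$, the intrinsic curvature has the form $R_p(X,Y)=-di([X,Y])\in di(\mathfrak k)$ for $X,Y\in\mathfrak m\cong T_pN$, while $I_p\in di(\mathfrak k)$ through the central $\U(1)$-factor of $K$ which generates the complex structure. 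Hence $\h'\subseteq di(\mathfrak k)$, so $\operatorname{Hol}^0_p(\nabla')\subseteq i(K^0)$.

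To transfer this containment to $\nabla^\perp$, I use $\operatorname{Hol}^0_p(\nabla^\perp)=J_p\,\operatorname{Hol}^0_p(\nabla')\,J_p^{-1}\subseteq J_p\,i(K^0)\,J_p^{-1}$. A direct computation based on the anticommutation $J\circ I=-I\circ J$ and the infinitesimal $K$-action on the quaternionic bundle $Q_p$ shows that, for each $Y\in\mathfrak k$, the operator $J_p\circ di(Y)\circ J_p^{-1}$ differs from $d\nu(Y)$ by a scalar multiple of $I|_{T_pN^\perp}$; this last operator itself coincides with $d\nu(Z)$ (up to a nonzero scalar) where $Z\in\mathfrak k$ generates the central $\U(1)$. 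Consequently $J_p\,i(K^0)\,J_p^{-1}\subseteq \nu(K^0)$, and the lemma follows.

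The main obstacle is the final identification step: since $K^0$ need not stabilize the specific section $J_p$ (it can rotate the pair $(J,IJ)$ inside the plane $I_p^\perp\cap Q_p$), one has to keep track of the correction term that arises when conjugating by $J_p$, and then verify that this term lies in $d\nu(\mathfrak k)$ by exhibiting a single element of $\mathfrak k$ whose slice-representation image is proportional to $I|_{T_pN^\perp}$. This bookkeeping ultimately relies on the Hermitian symmetric structure of $N$ providing enough central elements in $\mathfrak k$ to cover both the isotropy and the slice representations simultaneously.
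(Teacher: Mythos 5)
Your route is genuinely different from the paper's, which does not compute anything with $\nabla'$ here: it observes that a parallel MTC submanifold has curvature--invariant normal spaces, invokes Naitoh's theorem to conclude that $N$ is a \emph{symmetric submanifold}, and then applies the argument of Console--Di Scala, where normal parallel transport is realized by extrinsic isometries so that $\operatorname{Hol}^0_p(\nabla^\perp)$ lands inside $\nu(K)$ for structural reasons. The first stages of your computation are sound: $\nabla'R'=0$ follows from \eqref{dercurv} and the local symmetry of $N$, so $\h'$ is spanned by the $R'_p(X,Y)$, and by \eqref{curv} these lie in $di(\mathfrak{k})$ because both $R_p(X,Y)$ and $I_p$ do for the Hermitian symmetric space $G/K^0$; conjugation by $J_p$ then carries $\h'$ onto the normal holonomy algebra.

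The gap is in the final transfer. The $K^0$-equivariance of $J$ gives, for $Y\in\mathfrak{k}$,
\[
J_p\,di(Y)\,J_p^{-1}\;=\;d\nu(Y)\;-\;\theta'(Y)\,I_p|_{T_pN^\perp},
\]
where $\theta'$ is the character by which $K^0$ rotates the circle $\{J'\in Z_p:\ J'\perp I_p\}$ (note the correction is a linear functional of $Y$, not a fixed scalar). Hence what you actually obtain is only $J_p\h'J_p^{-1}\subseteq d\nu(\mathfrak{k})+\R\, I_p|_{T_pN^\perp}$, and the lemma reduces to proving $I_p|_{T_pN^\perp}\in d\nu(\mathfrak{k})$. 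For the central element $Z$ with $di(Z)=c\,I_p|_{T_pN}$, $c\neq 0$, one computes (using $J_pI_pJ_p^{-1}=-I_p$) that $d\nu(Z)=(\theta'(Z)-c)\,I_p|_{T_pN^\perp}$, so everything hinges on $\theta'(Z)\neq c$ --- precisely the ``nonzero scalar'' you assert without proof. This is not automatic: an element of $\Sp(n)\cdot\Sp(1)$ can act as a nonzero multiple of $I_p$ on $T_pN$ and trivially on $J_p(T_pN)$, so the claim is a genuine constraint on the groups $K$ occurring in Theorem \ref{Tsu}. It can be established --- e.g.\ for $\C\P^n\subset\H\P^n$ a direct weight computation gives $d\nu(Z)=-(n-1)I_p|_{T_pN^\perp}\neq 0$ for $n\geq 2$, and in the non--totally geodesic cases the $K$-equivariance of the second fundamental form together with the $\C$-trilinearity of the associated cubic form forces $d\nu(Z)=2c\,I_p|_{T_pN^\perp}$ --- but some such argument must be supplied; as written, the decisive step of your proof is an unproved assertion.
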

\begin{proof}
The key fact is that the parallel submanifold $N$ has curvature
invariant normal space (see e.g. \cite{Fun} or \cite{AlMa}), hence, by a result of
Naitoh \cite{Nai}, it is a {\em symmetric submanifold}. Thus we can
apply the same arguments as in \cite[Proposition 2.4]{CD} to reach the conclusion.
\end{proof}

Now observe that if we prove that $\h=\h'$ then we have $\dim \h'=\dim
K$ and
\[
\dim K=\dim \h'=\dim \operatorname{Hol}^0_p(\nabla^\perp) \leq \dim
\nu(K) \leq \dim K
\]
so that $\operatorname{Hol}^0_p(\nabla^\perp)=\nu(K)$.
The following lemma concludes the proof.
\begin{lemma}
$I_p \in \h'$, hence $\h=\h'$.
\end{lemma}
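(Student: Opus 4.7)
My plan is to use the real-analytic Ambrose--Singer theorem to identify $\h'$ as the linear span of the curvature values $R'(X,Y)_p$, and then to combine the algebraic form of $R'$ given by \eqref{curv} with the K\"ahler identity $\operatorname{tr}(I\circ R(X,Y))=2\rho(X,Y)$ to force the ``semisimple part'' of $\h$ inside $\h'$.

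First I would observe that, since $N$ is parallel and hence locally symmetric, $\nabla R\equiv 0$ on $N$; by \eqref{dercurv} this gives $\nabla'R'\equiv 0$ as well, and the real-analytic Ambrose--Singer theorem yields
$$\h=\operatorname{span}_{\mathbb R}\{R(X,Y)_p:X,Y\in T_pN\},\qquad \h'=\operatorname{span}_{\mathbb R}\{R'(X,Y)_p\}.$$
Because $N$ is (locally) Hermitian symmetric, $I_p$ is the isotropy image of the central element of the isotropy Lie algebra, so $\h=\su'\oplus\mathbb R\,I_p$ with $\su':=\h\cap\su(T_pN)$. Assuming $N$ is K\"ahler--Einstein with constant $\lambda>0$ (cases (1), (3), (5)--(8) of Theorem~\ref{Tsu}), the K\"ahler identity $\operatorname{tr}(I\circ R(\xi))=2\lambda\,\omega(\xi)$ shows that every bivector $\xi\in\Lambda^{2}T_pN$ with $R(\xi)\in\su(T_pN)$ must satisfy $\omega(\xi)=0$; substituting into \eqref{curv} gives $R'(\xi)=R(\xi)\in\h'$, so that $\su'\subseteq\h'$. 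The chain $\su'\subseteq\h'\subseteq\h=\su'\oplus\mathbb R\,I_p$ leaves only the dichotomy $\h'=\su'$ or $\h'=\h$; the former would mean $\h'\subseteq\su(T_pN)$ in contradiction with Lemma~\ref{hol2}, so $\h'=\h$ and $I_p\in\h'$.

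For the locally reducible non K\"ahler--Einstein cases (2) and (4), the connection $\nabla'=\nabla-\alpha\otimes I$ preserves the product splitting $N\cong N_1\times N_2$, because both $I$ and the scalar 1-form $\alpha$ in \eqref{conn} are block-diagonal; I would run the same computation inside each irreducible K\"ahler--Einstein factor, the analogous dichotomy reducing $\h'=\h$ to the non-vanishing of a numerical quantity of the form $1-c\bigl(\tfrac{n_1}{\lambda_1}+\tfrac{n_2}{\lambda_2}\bigr)$, with $\lambda_i$ the Einstein constant of the $i$-th factor. The \emph{main obstacle} is precisely this last step: the global Lemma~\ref{hol2} only rules out simultaneous collapse of both factor-wise holonomies into the semisimple part, so eliminating the asymmetric ``mixed'' possibility (one factor holonomy contained in $\su(T_pN_i)$, the other not) requires a factor-by-factor Takeuchi-style scalar-curvature verification for the specific embeddings~(2) and~(4).
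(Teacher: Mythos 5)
Your argument for the K\"ahler--Einstein cases (1), (3), (5)--(8) is correct and takes a genuinely different route from the paper's. You deduce $\h\cap\su(T_pN)\subseteq\h'$ from $\tr(I\circ R_{XY})=2\rho(X,Y)=2\lambda\,\omega(X,Y)$ together with \eqref{curv}, and then the dichotomy $\h'=\h\cap\su(T_pN)$ or $\h'=\h$ is settled by the fact that $\h'\not\subseteq\su(T_pN)$ (which is what the \emph{proof} of Lemma \ref{hol2} actually establishes, so your citation is legitimate even though the statement of that lemma only asserts a nontrivial center). The paper instead uses Schur's lemma for the irreducible cases: the nontrivial center $\cc$ of $\h'\subseteq\u(T_pN)$ commutes with $\operatorname{Span}\{\h',I_p\}=\operatorname{Span}\{\h,I_p\}$, and irreducibility of the isotropy action forces $\cc=\R\,I_p$. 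Your version has the advantage of not needing irreducibility, which is why it also absorbs case (3).

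The genuine gap is in cases (2) and (4), and your sketch for them would not go through as written. Although $\nabla'=\nabla-\alpha\otimes I$ does preserve the splitting $TN=TN_1\oplus TN_2$, the curvature $R'_{XY}=R_{XY}+c\,\omega(X,Y)I$ for $X,Y$ tangent to $N_1$ acts nontrivially on $TN_2$ through the term $c\,\omega_1(X,Y)\,I|_{TN_2}$; hence $\h'$ is \emph{not} a direct sum of factor-wise holonomies and the per-factor dichotomy you invoke is not available. Moreover, as you yourself note, even a corrected factor-wise argument leaves the asymmetric possibility open. The paper closes the reducible cases (2), (3), (4) by a direct computation: in each irreducible factor ($\C\P^1$ or a complex quadric) one exhibits a tangent vector $X_i$ with $R_{X_i,IX_i}$ a nonzero multiple of the factor complex structure $I_i$ (for the quadric $\SO(n+1)/\SO(2)\times\SO(n-1)$ take $X=e_1\otimes v$ in $\mathfrak{m}\cong\R^2\otimes\R^{n-1}$, so that $[X,IX]=|v|^2\,e_1\wedge e_2$ lies in the $\so(2)$-summand of the isotropy algebra); rescaling the $X_i$ to equalize the constants produces $X$ with $R_{X,IX}=\pm I_p$, and then \eqref{curv} gives $R'_{X,IX}=(\pm1+c\,|X|^2)\,I_p$, a nonzero multiple of $I_p$ for a suitable choice of $|X|$, whence $I_p\in\h'$ directly. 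Some explicit curvature computation of this kind is exactly what your proof is missing for (2) and (4).
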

\begin{proof}
Let us distinguish two cases.\\
If $N$ is irreducible the center $\cc$ of $\h'$ is $\R\cdot I_p$
because $\h$ acts irreducibly on $T_pN$.\\
In the remaining cases (2), (3) and (4) of Theorem \ref{Tsu} we may
use \ref{curv} and the explicit expression for the curvature of the
involved symmetric spaces;
It is not difficult to exhibit $X$ such that
$R_{XIX}=\pm I_p$ and obtain the claim.
\end{proof}

\end{document}